\documentclass[a4paper, 10pt, parskip=half]{scrartcl}

\usepackage[utf8]{inputenc}
\usepackage[T1]{fontenc}
\usepackage{lmodern}
\usepackage{amsmath}
\usepackage{amssymb}
\usepackage{amsthm}
\usepackage{amsfonts}
\usepackage{dsfont}
\usepackage{mathtools}
\usepackage{hyperref}
\hypersetup{
  colorlinks=true,
  linkcolor=black,
  citecolor=black,
  urlcolor=blue,
  pdftitle={Shrinking vs. expanding: the evolution of spatial support in degenerate Keller--Segel systems},
  pdfauthor={Mario Fuest, Frederic Heihoff},
  pdfkeywords={degenerate diffusion; support shrinking; finite speed of propagation; critical parameters; chemotaxis},
  bookmarksopen=true,
}

\usepackage[numbers]{natbib}

\RequirePackage{geometry}
\geometry{
  textheight=672pt,
  textwidth=468pt,
  centering,
  headheight=0pt,
  headsep=12pt,
  footskip=40pt,
  footnotesep=24pt plus 2pt minus 12pt,
  columnsep=2pc
}

\newcommand{\R}{\mathbb{R}}

\newcommand{\N}{\mathbb{N}}

\newcommand{\mc}[1]{\mathcal{#1}}

\ifdefined\labelenumi
  \renewcommand{\labelenumi}{(\roman{enumi})}
  
\fi

\newcommand{\eps}{\varepsilon}

\DeclareMathOperator{\supp}{supp}
\DeclareMathOperator{\ess}{ess}

\newcommand{\defs}{\coloneqq}
\newcommand{\sfed}{\eqqcolon}

\newcommand{\nea}{\nearrow}

\newcommand{\sea}{\searrow}

\newcommand{\ol}{\overline}
\newcommand{\ul}{\underline}


\newcommand{\dx}{\,\mathrm{d}x}

\newcommand{\drho}{\,\mathrm{d}\rho}

\newcommand{\loc}{\mathrm{loc}}

\newcommand{\texist}{T_0}

\newcommand{\intom}{\int_\Omega}
\newcommand{\intnt}{\int_0^T}
\newcommand{\intntom}{\intnt \intom}

\newcommand{\Ombar}{\ol \Omega}

\newcommand{\leb}[2][\Omega]{\ensuremath{L^{#2}(#1)}}

\newcommand{\sob}[3][\Omega]{\ensuremath{W^{#2, #3}(#1)}}

\newcommand{\grad}{\nabla}
\newcommand{\laplace}{\Delta}

\newcommand{\ue}{u_\eps}
\newcommand{\uet}{u_{\eps t}}
\newcommand{\ve}{v_\eps}

\renewcommand{\L}[1]{{L^{#1}(\Omega)}}
\newcommand\numberthis{\addtocounter{equation}{1}\tag{\theequation}}

\newcommand{\Peps}{\mc P_\eps}

\newcommand{\we}{w_\eps}

\newcommand{\Acrit}{A_{\mathrm{crit}}}
\newcommand{\Ccrit}{C_{\mathrm{crit}}}

\newcommand{\rfn}{\rho}
\newcommand{\wsub}{\ul w}
\newcommand{\wsubmid}{\ul w_{\mathrm{mid}}}
\newcommand{\wsubout}{\ul w_{\mathrm{out}}}
\newcommand{\Tsub}{T\raisebox{4pt}{\scriptsize $\star$}}
\newcommand{\Asub}{C\raisebox{4pt}{\scriptsize $\star$}} 

\newcommand{\wsup}{\ol w}
\newcommand{\wsupmid}{\ol w_{\mathrm{mid}}}
\newcommand{\wsupout}{\ol w_{\mathrm{out}}}
\newcommand{\Tsup}{\Tsub}
\newcommand{\Asup}{\Asub}

\makeatletter
\renewenvironment{proof}[1][\proofname]{\par
  \pushQED{\qed}%
  \normalfont \topsep0\p@\relax
  \trivlist
  \item[\hskip\labelsep\scshape
  #1\@addpunct{.}]\ignorespaces
}{%
  \popQED\endtrivlist\@endpefalse
}
\makeatother

\makeatletter
\renewcommand{\paragraph}{%
  \@startsection{paragraph}{4}%
  {\z@}{1ex \@plus 1ex \@minus .2ex}{-1em}%
  {\normalfont\normalsize\bfseries}%
}
\makeatother

\newtheoremstyle{nremark}
  {\topsep} 
  {\topsep} 
  {} 
  {} 
  {\bfseries} 
  {.} 
  {.5em} 
  {} 

\newtheorem{base}{Base}[section]
\numberwithin{equation}{section}

\newtheorem{theorem}[base]{Theorem} \newtheorem*{theorem*}{Theroem}
\newtheorem{lemma}[base]{Lemma} \newtheorem*{lemma*}{Lemma}
 \newtheorem*{prop*}{Proposition}
 \newtheorem*{cor*}{Corollary}

\theoremstyle{nremark}
\newtheorem{definition}[base]{Definition} \newtheorem*{definition*}{Definition}
 \newtheorem*{example*}{Example}
 \newtheorem*{cond*}{Condition}

\newtheorem{remark}[base]{Remark} \newtheorem*{remark*}{Remark}

\setkomafont{title}{\normalfont \Large}
\title{Shrinking vs.\ expanding: the evolution of spatial support in degenerate Keller--Segel systems}

\usepackage{authblk}

\author[1]{Mario Fuest\footnote{e-mail: fuest@ifam.uni-hannover.de, ORCID: 0000-0002-8471-4451}}
\author[2]{Frederic Heihoff\footnote{e-mail: fheihoff@math.upb.de, ORCID: 0000-0003-3654-0271, corresponding author}}
\affil[1]{Leibniz Universität Hannover, Institut für Angewandte Mathematik, Welfengarten 1, 30167 Hannover, Germany}
\affil[2]{Institut für Mathematik, Universität Paderborn, Warburger Str.~100, 33098 Paderborn, Germany}  
\date{}

\begin{document}
\maketitle

\KOMAoptions{abstract=true}
\begin{abstract}
\noindent
We consider radially symmetric solutions of the degenerate Keller--Segel system
\begin{align*}
  \begin{cases}
    \partial_t u=\nabla\cdot (u^{m-1}\nabla u - u\nabla v),\\
    0=\Delta v -\mu +u,\quad\mu =\frac{1}{|\Omega|}\int_\Omega u,
  \end{cases}
\end{align*}
in balls $\Omega\subset\mathbb R^n$, $n\ge 1$, where $m>1$ is arbitrary.
Our main result states that the initial evolution of the positivity set of $u$
is essentially determined by the shape of the (nonnegative, radially symmetric, Hölder continuous) initial data $u_0$ near the boundary of its support $\overline{B_{r_1}(0)}\subsetneq\Omega$:
It shrinks for sufficiently flat and expands for sufficiently steep $u_0$.
\\[2pt]
More precisely, there exists an explicit constant $A_{\mathrm{crit}} \in (0, \infty)$ (depending only on $m, n, R, r_1$ and $\int_\Omega u_0$) such that if
\begin{align*}
  u_0(x)\le A(r_1-|x|)^\frac{1}{m-1}
  \qquad \text{for all $|x|\in(r_0, r_1)$ and some $r_0\in(0,r_1)$ and $A<A_{\mathrm{crit}}$},
\end{align*}
then there are $T>0$ and $\zeta>0$ such that $\sup\{\, |x| \mid x \in \operatorname{supp} u(\cdot, t)\,\}\le r_1 -\zeta t$ for all $t\in(0, T)$, 
while if
\begin{align*}
  u_0(x)\ge A(r_1-|x|)^\frac{1}{m-1}
  \qquad \text{for all $|x|\in(r_0, r_1)$ and some $r_0 \in (0, r_1)$ and $A>A_{\mathrm{crit}}$},
\end{align*}
then we can find $T>0$ and $\zeta>0$ such that $\sup\{\, |x| \mid x \in \operatorname{supp} u(\cdot, t)\,\}\ge r_1 +\zeta t$ for all $t\in(0, T)$.
\\[5pt]
\textbf{Key words:} {degenerate diffusion; support shrinking; finite speed of propagation; critical parameters; chemotaxis}\\
\textbf{MSC (2020):} {35B33 (primary); 35B51, 35K59, 35K65, 92C17 (secondary)}
\end{abstract}

\section{Introduction}
The present paper studies the evolution of the positivity set of solutions to a model problem with degenerate diffusion and attractive taxis, namely
\begin{align}\label{prob:main}
  \begin{cases}
    \partial_t u = \nabla \cdot (u^{m-1} \nabla u - u \nabla v)          & \text{in $\Omega \times (0, T)$}, \\
    0   = \Delta v - \mu + u, \quad \mu = \frac{1}{|\Omega|} \intom u_0, & \text{in $\Omega \times (0, T)$}, \\
    \partial_\nu u = \partial_\nu v = 0                                  & \text{on $\partial \Omega \times (0, T)$}, \\
    u(\cdot, 0) = u_0                                                    & \text{in $\Omega$},
  \end{cases}
\end{align}
where $m > 1$ is a given parameter and $\Omega \subset \R^n$ is a ball.
The system \eqref{prob:main} is a parabolic--elliptic simplification of the celebrated Keller--Segel model (\cite{KellerSegelInitiationSlimeMold1970}),
which models the spatio-temporal evolution of an organism (with density $u$) that is partially attracted by a chemical (with concentration $v$) produced by itself.
This chemotactic effect is modelled by the term $- \nabla \cdot (u \nabla v)$
and the non-directed mobility is supposed to be density-dependent and to take the form $\nabla \cdot (u^{m-1} \nabla u)$;
see, e.g., \cite{PainterHillenVolumefillingQuorumsensingModels2002}, \cite{HillenPainterUserGuidePDE2009}, \cite{GurtinMacCamyDiffusionBiologicalPopulations1977} for modelling considerations.

With respect to blow-up in \eqref{prob:main}, the exponent $m_c \defs 2 - \frac2n$ is critical:
If $m > m_c$, then for all sufficiently regular, nonnegative initial data, there exist global, bounded solutions
(\cite{SugiyamaGlobalExistenceSubcritical2006}, 
see also \cite{KowalczykSzymanskaGlobalExistenceSolutions2008}, 
\cite{IshidaEtAlBoundednessQuasilinearKeller2014}), 
while if $m \in (1, m_c)$, then for all $M > 0$ one can find nonnegative $u_0$ with $\intom u_0 = M$ and a solution of \eqref{prob:main} blowing up in finite time
(\cite{CieslakWinklerFinitetimeBlowupQuasilinear2008}, 
\cite{SugiyamaGlobalExistenceSubcritical2006}, 
see also \cite{IshidaYokotaBlowupFiniteInfinite2013}, 
\cite{HashiraEtAlFinitetimeBlowupQuasilinear2018}). 
Moreover, in the fully parabolic full-space setting with $m = m_c > 1$,
there is a critical mass distinguishing between global boundedness and the possibility of finite-time blow-up (\cite{BlanchetLaurencotParabolicparabolicKellerSegelSystem2013}, \cite{LaurencotMizoguchiFiniteTimeBlowup2017}).
Regarding the fully parabolic nondegenerate setting with potential nonlinear taxis sensitivity,
we refer to \cite{HorstmannWinklerBoundednessVsBlowup2005}, \cite{TaoWinklerBoundednessQuasilinearParabolic2012}, \cite{IshidaEtAlBoundednessQuasilinearKeller2014} for global boundedness,
to \cite{WinklerDoesVolumefillingEffect2009} for blow-up in either finite or infinite time, 
and to \cite{CaoFuestFinitetimeBlowfullyParabolic2024} (as well as to the precedents \cite{CieslakStinnerFinitetimeBlowupGlobalintime2012}, \cite{CieslakStinnerFiniteTimeBlowupSupercritical2014}, \cite{CieslakStinnerNewCriticalExponents2015}) for finite-time blow-up.
For an overview of further dichotomies between boundedness and blow-up for chemotaxis systems, see the surveys \cite{BellomoEtAlMathematicalTheoryKeller2015}, \cite{LankeitWinklerFacingLowRegularity2019}.
 
Much less studied are support propagation properties of \eqref{prob:main} --
in contrast to those of its taxis-free relative 
\begin{align}\label{prob:pme}
  \partial_t u = \nabla \cdot (u^{m-1} \nabla u),
\end{align}
the porous medium equation (PME) (where again $m > 1$ is a given parameter),
for which finite speed of propagation and the existence of waiting times constitute celebrated phenomena.
The former states that the positivity set of solutions to nonnegative, nontrivial initial $u_0$ with compact support does not grow infinitely fast (\cite[Theorem~14.6]{VazquezPorousMediumEquation2006}).
This contrasts the nondegenerate heat equation, \eqref{prob:pme} with $m = 1$,
whose solutions to such initial data become immediately positive by the strict maximum principle.

While solutions to the PME eventually propagate to all compact subsets of the domain in finite time (\cite[Theorem~14.3]{VazquezPorousMediumEquation2006}),
the question whether the support already needs to grow near $t=0$ is more delicate.
As it turns out, the answer is completely determined by the flatness of $u_0$ near a point $x_0 \in \supp u_0$:
If $\limsup_{r \sea 0} r^{-\frac{2}{m-1}-n} \int_{B_r(x_0)} u_0(x) \dx = \infty$,
the support near $x_0$ immediately expands, while, if this quantity is finite,
then there is a so-called waiting time upon which the support near $x_0$ stays constant before it starts to grow, see \cite[Propsition~4.2]{AlikakosPointwiseBehaviorSolutions1985}.
In particular, if the initial data are of the form $u_0(x) = C (|x|-r_1)_+^\alpha$ for $|x|$ close to $r_1$, a waiting time phenomenon near $x_0$ occurs if and only if $\alpha \ge \frac{2}{m-1}$.
Similar results have been obtained for other degenerate equations as well,
see for instance \cite{DalPassoEtAlWaitingTimePhenomena2003} both for doubly nonlinear and for higher order equations
and \cite{LaurencotMatiocFiniteSpeedPropagation2017} for a repulsive--repulsive fully cross-diffusive system.

On the other hand, degenerate diffusion as in \eqref{prob:pme} alone cannot cause a retraction of the free boundary;
the size of the support can never strictly decrease (\cite[Proposition~14.1]{VazquezPorousMediumEquation2006}).
This raises the question whether an additional mechanism added to the PME can lead to initial support shrinking.
A natural such candidate is attractive taxis:
As discussed above, the thereby introduced tendency towards aggregation is in some situations sufficiently strong to completely overcome the stabilizing effect of diffusion in the sense that it causes (finite-time) blow-up,
so it is conceivable that it may also reverse the direction of movement of the free boundary, at least for some time.

This is indeed the case: Initial support shrinking has been detected in \cite{FischerAdvectiondrivenSupportShrinking2013}
for a chemotaxis system with prevention of overcrowding introduced in \cite{BurgerEtAlKellerSegelModelChemotaxis2006},
and in \cite{XuEtAlChemotaxisModelDegenerate2020} for a fully parabolic chemotaxis--consumption model (where one then also needs to impose certain conditions on $v(\cdot, 0)$).
On the other hand, \cite{BlackAbsenceDeadcoreFormations2024} shows that solutions to degenerate chemotaxis systems may not form dead-cores in finite time, i.e., that they stay positive for positive initial data.
 
As to \eqref{prob:main}, it is known that the support propagates with (at most) finite speed
if either $\Omega = \R$ (\cite{SugiyamaFiniteSpeedPropagation2012}) or $\Omega = \R^n$ (\cite{KimYaoPatlakKellerSegelModelIts2012}),
that the positivity set may be contained in some proper subset of the domain (\cite{StevensWinklerTaxisdrivenPersistentLocalization2022}, \cite{KimYaoPatlakKellerSegelModelIts2012})
and that initial support shrinking is possible if $\Omega = \R^n$ and $m > 2 - \frac2n$ (\cite{FischerAdvectiondrivenSupportShrinking2013}).

\paragraph{Main result.}
Our main result goes beyond these findings and identifies a critical condition distinguishing (for a wide class of initial data) between initial inward and outward motion of the free boundary
for radially symmetric solutions of \eqref{prob:main}.
\begin{theorem}\label{th:main}
  Let $n \in \N$, $R > 0$, $\Omega = B_R(0) \subset \R^n$, $m > 1$ and $r_1 \in (0, R)$.
  Moreover, let $u_0 \in \leb\infty$ be radially symmetric and nonnegative a.e.\ with $\sup (\ess\supp u_0) = r_1$
  and set
  \begin{align}\label{eq:def_c_crit}
    \Acrit \defs \left[ \frac{\intom u_0 \left(1 - \frac{r_1^n}{R^n}\right) (m-1)}{\omega_n r_1^{n-1}} \right]^\frac{1}{m-1}.
  \end{align}
  Then there exist $T_0 \in (0, \infty]$ and a nonnegative, radially symmetric weak solution $(u, v)$ of \eqref{prob:main} in $\Ombar \times [0, T_0)$ in the sense of Definition~\ref{def:weak_sol} below
  (which is moreover Hölder continuous if $u_0$ is) such that
  if
  \begin{align}\label{eq:cond_shrinking}
    \exists A < \Acrit\; \exists r_0 \in (0, r_1)\; \forall r \in (r_0, r_1) : u_0(r) \le A (r_1-r)^\frac{1}{m-1},
  \end{align}
  then the spatial support initially shrinks in the sense that there are $T \in (0,T_0)$ and $\zeta > 0$ with
  \begin{align}\label{eq:result_shrinking}
    \sup(\ess\supp u(\cdot, t)) \leq r_1 - \zeta t
    \qquad \text{for a.e.\ $t \in (0, T)$,}
  \end{align}
  while if
  \begin{align}\label{eq:cond_expanding}
    \exists A > \Acrit\; \exists r_0 \in (0, r_1)\; \forall r \in (r_0, r_1) : u_0(r) \ge A (r_1-r)^\frac{1}{m-1},
  \end{align}
  then the spatial support initially expands in the sense that there are $T \in (0,T_0)$ and $\zeta > 0$ with
  \begin{align}\label{eq:result_expanding}
    \sup (\ess \supp u(\cdot, t) ) \geq r_1 + \zeta t  
    \qquad \text{for a.e.\ $t \in (0, T)$.}
  \end{align}
  (Here and throughout the article, we write $\varphi(|x|)$ for $\varphi(x)$ whenever $\varphi$ is radially symmetric.)
\end{theorem}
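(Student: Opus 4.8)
The plan is to reduce \eqref{prob:main} to a radial scalar problem, to recognise $\Acrit$ as the critical steepness at which the porous-medium speed of the free boundary exactly balances the inward taxis drift, and then to trap the free boundary between the moving edges of explicit multi-zone sub- and supersolutions built from the associated travelling waves.

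First I would pass to radial variables $u = u(r,t)$, $v = v(r,t)$, introduce the cumulative mass $M(r,t) \defs \int_0^r \rho^{n-1} u(\rho,t)\drho$, and integrate the elliptic equation twice to obtain the pointwise identity $\partial_r v(r,t) = \frac{\mu r}{n} - r^{1-n} M(r,t)$ with $\mu = \frac1{|\Omega|}\intom u_0$. Thus the taxis drift is a \emph{local} function of $M$, and $0 \le M \le M(R,t) = M(R,0)$ by mass conservation; a short computation from \eqref{eq:def_c_crit} then shows that $\partial_r v(r_1,t) = -\Acrit^{m-1}/(m-1)$ whenever $\supp u(\cdot,t) \subseteq \ol{B_{r_1}(0)}$, and more generally $-\partial_r v$ near $r = r_1$ stays close to $\Acrit^{m-1}/(m-1)$ as long as essentially all the mass remains below radius $r_1$. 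Dropping the curvature term $\frac{n-1}{r}u^{m-1}\partial_r u$ and the zeroth-order part of the taxis term, the equation near the free boundary reduces to a one-dimensional porous-medium equation with constant inward drift $\Acrit^{m-1}/(m-1)$, whose travelling profile $u \approx A(r_1-r)^{1/(m-1)}$ moves inward with speed $(\Acrit^{m-1}-A^{m-1})/(m-1)$ -- positive exactly when $A < \Acrit$ and negative when $A > \Acrit$. This is the mechanism; the remaining work is to make it rigorous.

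For the rigorous part I would work on the level of the nondegenerate approximations $(\ue,\ve)$ (with $s\mapsto s^{m-1}$ replaced by $s\mapsto(s+\eps)^{m-1}$) underlying the construction of $(u,v)$, passing where convenient to a pressure-type variable $w_\eps$ proportional to $(\ue+\eps)^{m-1}$, for which the $\ue$-equation -- with $\partial_r\ve$ read off from the identity above -- is genuinely parabolic and admits a comparison principle, all estimates being uniform in $\eps$. In the flat case \eqref{eq:cond_shrinking} I would build, for suitable $\zeta>0$ and $T>0$, a \emph{supersolution} $\wsup$ vanishing on $\{|x| \ge r_1 - \zeta t\}$, dominating $u_0$, and behaving near its moving free boundary like $\wsupout(|x|,t) \approx \kappa(r_1 - \zeta t - |x|)^{1/(m-1)}$, glued through a middle zone $\wsupmid$ to a large, exponentially-in-time growing barrier further inside; comparison then forces $u \le \wsup$, hence \eqref{eq:result_shrinking}. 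The crucial point is that the free-boundary-velocity condition for $\wsup$ -- where $\partial_r v \approx -\Acrit^{m-1}/(m-1)$ -- amounts to $\kappa \lesssim \Acrit$, while the requirement $\wsup(\cdot,0) \ge u_0$ near $r_1$ only forces $\kappa \gtrsim A$, so that since $A < \Acrit$ a valid $\kappa$ (and then $\zeta$, $T$) exists, the curvature term being favourably signed or of lower order in the distance to the free boundary and the zeroth-order terms being absorbed by the exponential factor. In the steep case \eqref{eq:cond_expanding} I would argue symmetrically with a \emph{subsolution} $\wsub$ whose free boundary is dragged outward to $\{|x| = r_1 + \zeta t\}$, so that $u \ge \wsub$ forces \eqref{eq:result_expanding}; here the \emph{unconditional} estimate $\partial_r v(r,t) \ge \frac{\mu r}{n} - r^{1-n}M(R,0)$ already shows that the inward drift near and beyond $r_1$ is at most $\Acrit^{m-1}/(m-1) + o(1)$, hence strictly below the expansion speed of a profile of steepness $A' \in (\Acrit,A)$, so the free boundary of $\wsub$ indeed advances at a positive rate. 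Finally, letting $\eps \sea 0$ transfers both conclusions to $(u,v)$, and the Hölder assertion is the standard regularity theory for degenerate parabolic equations, independent of the support analysis.

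The heaviest part is the construction and verification of these comparison functions. The self-consistency of the drift -- its dependence on the unknown mass distribution of $u$ -- is handled either through the local representation $\partial_r v = \frac{\mu r}{n} - r^{1-n}M$ together with a continuity/bootstrap argument on the maximal time up to which the support has not yet spread, or by passing directly to the (regularised) cumulative-mass equation, where $M$ stays bounded by $M(R,0)$ irrespective of how large $\|u_0\|_{\leb\infty}$ is. The genuinely delicate issue is to make the profiles simultaneously flat (resp.\ steep) enough near the moving free boundary for its velocity to have the correct sign with a quantitative margin absorbing the curvature and zeroth-order perturbations, and large (resp.\ small) enough in the middle and inner zones to dominate (resp.\ be dominated by) the merely bounded, possibly oscillatory datum $u_0$; reconciling these requires the middle zones $\wsupmid$, $\wsubmid$ and, when $\|u_0\|_{\leb\infty}$ is large or $(r_0,r_1)$ is short, a further shrinking of $T$ (via finite speed of propagation) so that only the behaviour of $u_0$ on $(r_0,r_1)$ enters the short-time motion of the free boundary.
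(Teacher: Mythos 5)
Your proposal identifies the right mechanism: $\Acrit$ is exactly the steepness at which the Barenblatt-type expansion speed $A^{m-1}/(m-1)$ of the PME front balances the inward taxis drift $|\partial_r v(r_1)| = \Acrit^{m-1}/(m-1)$, and you correctly reduce $\partial_r v$ to the local expression $\frac{\mu r}{n} - r^{1-n}M(r,t)$ and plan to argue by comparison on the nondegenerate approximations before letting $\eps\searrow 0$. The route you describe, however, is genuinely different from the paper's in one decisive respect: you set up comparison at the level of $u$ (or a pressure variable $\sim(\ue+\eps)^{m-1}$), with multi-zone sub/supersolutions and a bootstrap on the mass distribution to control the nonlocal drift, whereas the paper performs the \emph{entire} comparison argument for the mass accumulation function $w(s,t)=n\int_0^{s^{1/n}}\rho^{n-1}u\,d\rho$ in the $s=r^n$ coordinate. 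After this transformation, the equation for $w$,
\[
\partial_t w - n^2 s^{2-\frac2n}(\partial_s w)^{m-1}\partial_{ss} w - w\,\partial_s w + \mu s\,\partial_s w = 0,
\]
is a genuinely local scalar quasilinear PDE — the nonlocality in $\partial_r v$ becomes the term $-w\,\partial_s w$ — so the comparison principle (adapted from \cite{bellomoFinitetimeBlowdegenerateChemotaxis2017}) applies directly with no bootstrap. The comparison functions are then simply $\mu R^n - C(r_1^n\pm\theta t - s)^{m/(m-1)}$ near the free boundary with an $\eps$-small linear or constant extension, and the sub/super dichotomy at the critical $C$ drops out from a one-line sign computation in \eqref{eq:intro:w_cmp_calc1}--\eqref{eq:intro:w_cmp_calc2}.

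This buys the paper two simplifications your sketch would still have to work hard for. First, it avoids the exponential-in-time interior barrier: because $w(R^n,\cdot)\equiv\mu R^n$ is an exact boundary datum, the comparison can be localized to $[r_0^n,R^n]$ (with the $r_0^n$-boundary ordering secured by the uniform continuity lemma), and in the shrinking case a two-step scheme — first a stationary barrier on $[r_0^n,R^n]$ to rule out expansion and to produce a good intermediate boundary value at some $r_2^n<R^n$, then a moving barrier on $[r_0^n,r_2^n]$ — replaces your continuity/bootstrap on the time up to which the support has not yet spread. Second, working with $w$ makes the free-boundary condition trivial to read off: shrinking is precisely $w(s,t)=\mu R^n$ for $s\ge r_1^n-\theta t$, and expansion is $w(s,t)<\mu R^n$ for $s<r_1^n+\theta t$, so no separate argument about the positivity set of $u$ is needed. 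Your approach is not unsalvageable — you do flag the passage to the cumulative-mass equation as one option, and you correctly observe the one-sided drift bound suffices in the expansion case — but as written the crucial comparison principle for the $u$- or pressure-equation with its nonlocal/self-consistent drift, and the compatibility of the multi-zone barrier with the degenerate free boundary, are left at the level of intent rather than a verifiable construction; the paper sidesteps all of this by never leaving the $w$-level.
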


\begin{remark}
  Since $(r_1-r_0)^{\alpha-\frac{1}{m-1}}$ converges to $0$ (respectively, $\infty$) as $r_0 \nea r_1$ if $\alpha > \frac{1}{m-1}$ (respectively, $\alpha < \frac{1}{m-1}$),
  we see that
  \begin{align}\label{eq:cond_shrinking_alpha}
    \exists \alpha > \frac{1}{m-1}\; \exists A > 0\; \exists r_0' \in (0, r_1)\; \forall r \in (r_0', r_1) : u_0(r) \le A (r_1 - r)^\alpha
  \end{align}
  and
  \begin{align}\label{eq:cond_expanding_alpha}
    \exists \alpha < \frac{1}{m-1}\; \exists A > 0\; \exists r_0' \in (0, r_1)\; \forall r \in (r_0', r_1) : u_0(r) \ge A (r_1 - r)^\alpha
  \end{align}
  imply \eqref{eq:cond_shrinking} and \eqref{eq:cond_expanding}, respectively, and hence initial shrinking and initial expanding, respectively.
\end{remark}

\begin{remark}
  Our strategy of proof, to be outlined below, necessitates that we limit our attention to radially symmetric settings.
  Moreover, we note that we can only treat the most outward boundary points of the support; Theorem~\ref{th:main} requires $u_0(x) = 0$ for all $|x| \ge r_1$.
\end{remark}

\begin{remark}
  Let us compare Theorem~\ref{th:main} to results regarding the PME \eqref{prob:pme}
  beyond the fact that the latter are not limited to radially symmetric settings and specific boundary points of the initial positivity set.
  \begin{enumerate}
    \item 
      The critical exponent $\frac{1}{m-1}$ in Theorem~\ref{th:main} is smaller than the critical waiting time exponent $\frac{2}{m-1}$ for the PME.

    \item
      While the condition \eqref{eq:cond_expanding_alpha} with $\frac{1}{m-1}$ replaced by $\frac{2}{m-1}$ implies initial expanding of the support for solutions to the PME,
      the analogue of \eqref{eq:cond_shrinking_alpha} does not imply initial support shrinking but instead that the support remains constant for some time.
      That is, the critical exponent for the PME distinguishes between the existence of waiting times and immediate expanding,
      the critical exponent for \eqref{prob:main} between shrinking and expanding.

    \item
      The behaviour at the critical exponent is also different.
      While for \eqref{prob:main}, Theorem~\ref{th:main} shows the existence of a critical parameter $\Acrit$
      and hence in particular that the support of solutions to initial data $u_0$ with $u_0(r) \sim (r_1-r)_+^\frac{1}{m-1}$ may shrink or expand depending on the implied constant,
      all solutions to the PME with initial data $u_0(r) \le A(r_1-r)_+^\frac{2}{m-1}$ exhibit a waiting time phenomenon, regardless of how large $A$ is
      (\cite[Propsition~4.2]{AlikakosPointwiseBehaviorSolutions1985}).
  \end{enumerate}
\end{remark}

\begin{remark}
  For the variant of \eqref{prob:main} posed in the full space and with the second equation replaced by $-\Delta v = u$,
  it has been claimed (without detailed proof) in \cite[Corollary on p.~1613]{FischerAdvectiondrivenSupportShrinking2013} that \eqref{eq:cond_shrinking_alpha} implies initial shrinking,
  provided that $m > 2 - \frac2n$.
  We again note a few difference to the situation in Theorem~\ref{th:main}.
  \begin{enumerate}
    \item
      The results in \cite{FischerAdvectiondrivenSupportShrinking2013} also hold in nonradial settings, while we only consider the radially symmetric case.
      As noted there, the geometric conditions on a point $x_0 \in \partial \supp u_0$ required by \cite{FischerAdvectiondrivenSupportShrinking2013}
      are in particular fulfilled whenever $x_0$ belongs to the boundary of the convex hull of $\supp u_0$.
      If $u_0$ is radially symmetric, the latter assumption is equivalent to requiring $u_0(x) = 0$ for all $|x| \ge |x_0|$, as we do in Theorem~\ref{th:main}.

    \item
      Since \cite{FischerAdvectiondrivenSupportShrinking2013} exclusively studies under which conditions shrinking occurs -- and not also when expansion happens --,
      optimality of the exponent $\frac{1}{m-1}$ is not obtained there.

    \item
      In \cite{FischerAdvectiondrivenSupportShrinking2013}, shrinking is not claimed for the critical exponent (cf.\ (43) in \cite{FischerAdvectiondrivenSupportShrinking2013}).
      However, although Theorem~\ref{th:main} only covers balls with finite radius, we note that the value $\Acrit$ in \eqref{eq:def_c_crit} converges to a positive, finite number as $R \to \infty$.
      This indicates that also in the full space setting shrinking should be possible for initial data fulfilling $u_0(r) \sim (r_1-r)_+^\frac{1}{m-1}$ near $r_1$,
      and that hence the criterion (43) in \cite{FischerAdvectiondrivenSupportShrinking2013} may not be optimal.
  
    \item
      Finally, we emphasize that, unlike \cite{FischerAdvectiondrivenSupportShrinking2013}, Theorem~\ref{th:main} does not pose any assumption on $m$ beyond the degeneracy condition $m > 1$;
      our main result in particular holds in cases where bounded weak solutions cease to exist after finite time (which may be the case if $m < 2 - \frac2n$,
      see \cite{CieslakWinklerFinitetimeBlowupQuasilinear2008}, \cite{HashiraEtAlFinitetimeBlowupQuasilinear2018}).
  \end{enumerate}
\end{remark}

\paragraph{Strategy of our proof.}
The radial symmetry assumption implies that the transformed quantity $w(s, t) \defs n \int_0^{s^\frac1n} \rho^{n-1} u(\rho, t) \drho$, first introduced in \cite{JagerLuckhausExplosionsSolutionsSystem1992},
solves the scalar equation
\begin{align}\label{eq:intro:w_eq}
  \partial_t w - n^2 s^{2-\frac2n} (\partial_s w)^{m-1} \partial_{ss} w - w \partial_s w + \mu s \partial_s w = 0
  \qquad \text{in $(0, R^n) \times (0, T_0)$}
\end{align}
with $w(0, t) = 0$ and $w(R^n, t) = \mu R^n$ for all $t \in (0, T)$.
Since
\begin{align}\label{eq:intro:inf_w}
  \big( \sup \{\, r \in (0, R) \mid u(r, t) > 0\,\}\big)^n = \inf \{\, s \in (0, R^n) \mid w(s, t) = \mu R^n\,\} \sfed I_w(t)
  \qquad \text{for all $t \in (0, T_0)$},
\end{align}
the evolution of the most outwards boundary point of the spatial support of $u$ corresponds to monotonicity properties of the infimum $I_w(t)$ in \eqref{eq:intro:inf_w}.
(Potential more inward interface points cannot be characterized in such a way, which is the reason that Theorem~\ref{th:main} does not cover them.)

In order to derive suitable estimates for $I_w(t)$, we shall make use of a comparison principle (cf.\ Lemma~\ref{lm:cmp_princ}).
For possible sub- and supersolutions, we take the ansatz
\begin{equation}\label{eq:intro:def_w_cmp}
  w(s,t) \defs \begin{cases}
    \mu R^n - C(r_1^n + \theta t - s)^{\frac{m}{m-1}} & \text{if } s \le r_1^n + \theta t, \\
    \mu R^n                                           & \text{if } s > r_1^n + \theta t,
  \end{cases}
\end{equation}
where $\theta$ is negative in the shrinking and positive in the expanding case.
(We note that \cite{StevensWinklerTaxisdrivenPersistentLocalization2022} utilizes such comparison functions with $\theta = 0$ to prove persistent localization.)
A direct computation reveals that, for $s < r_1^n + \theta t$, the left-hand side in \eqref{eq:intro:w_eq} equals
\begin{align}\label{eq:intro:w_cmp_calc1}
  \partial_s w \left[ -\theta + n^2 s^{2-\frac2n} \frac{(Cm)^{m-1}}{(m-1)^m} - w + \mu s\right].
\end{align}
Setting $\theta = 0$ and $s = r_1^n$, the second factor becomes
\begin{align}\label{eq:intro:w_cmp_calc2}
  n^2 r_1^{2n-2} \frac{(Cm)^{m-1}}{(m-1)^m} - \mu (R^n - r_1^n),
\end{align}
whose sign is determined by the size of $C$.
As long as $C$ is not critical, continuity arguments show that the signs of the quantities in \eqref{eq:intro:w_cmp_calc1} and \eqref{eq:intro:w_cmp_calc2} are the same
as long as $|\theta|$, $t$ and $r_1^n-s$ are sufficiently small (and $\partial_s w \geq 0$),
implying that there are parameters such that the function $w$ in \eqref{eq:intro:def_w_cmp} is a sub- or supersolution of \eqref{eq:intro:w_eq} in $(r_0^n, r_1^n) \times (0, T)$ for $r_0$ close to $r_1$ and small $T > 0$.
Since the growth conditions for $u_0$ in Theorem~\ref{th:main} translate to suitable conditions for $w(\cdot, 0)$ (cf.\ Lemma~\ref{lm:crit_w0})
and as the solution $w$ and the comparison function in \eqref{eq:intro:def_w_cmp} are correctly ordered at $r_0^n$ (and hence on the whole parabolic boundary) in a small time interval $(0, T')$ if they are initially,
the comparison principle yields the desired properties of $I_w(t)$.

As neither the solution $w$ nor the comparison functions above are (known to be) sufficiently regular to allow for rigorous applications of the comparison principle (Lemma~\ref{lm:cmp_princ}),
we instead perform these arguments for solutions $(\ue, \ve)$ to approximate, nondegenerate systems.
However, this approach introduces some additional challenges;
for instance, we need to adjust the comparison functions to account for the fact that the strict maximum principle forces $\ue$ to become positive immediately.

\paragraph{Plan of the paper.}
We recall local existence and approximation theory for \eqref{prob:main} in Section~\ref{sec:local_ex_conv}
and introduce the mass accumulation function in Section~\ref{sec:mass}.
Section~\ref{sec:cmp} not only contains the key ingredient of our proof, namely the definition of $\wsub$ and $\wsup$ and the verification of the sub- and supersolution properties, respectively,
but also the comparison theorem and the applications to $\we$.
Finally, Theorem~\ref{th:main} is proven in Section~\ref{sec:proof_main_thm}.

\paragraph{Notation.}
We henceforth fix $n \in \N$, $R > 0$, $\Omega = B_R(0) \subset \R^n$ and $m > 1$.

\section{Local existence and convergence to weak solutions}\label{sec:local_ex_conv}
To not unduly duplicate effort, we will largely rely on the weak solution theory for (\ref{prob:main}) presented in \cite{StevensWinklerTaxisdrivenPersistentLocalization2022} for the construction of the solutions discussed in Theorem~\ref{th:main}. Naturally, this means we will use the same standard notion of weak solution to (\ref{prob:main}) as in the mentioned reference.

\begin{definition}\label{def:weak_sol}
  Let $T_0 > 0$ and let $u_0 \in \leb\infty$ be nonnegative and radially symmetric with $\frac{1}{|\Omega|} \intom u_0 = \mu$.
  We call a pair of radially symmetric functions
  \begin{align*}
    (u, v) \in L_{\loc}^\infty(\Ombar \times [0, T_0)) \times L_{\loc}^\infty([0, T_0); \sob12)
    \quad \text{with} \quad
    u^m \in L_{\loc}^2([0, T_0); \sob12)
  \end{align*}
  being such that $u \ge 0$ a.e.\ in $\Omega \times (0, T_0)$
  a \emph{radial weak solution} of \eqref{prob:main} if
  \begin{align}\label{eq:weak_sol:u_eq}
      - \intntom u \varphi_t
      - \intom u_0 \varphi(\cdot, 0)
    = - \frac1m \intntom \nabla u^m \cdot \nabla \varphi
      + \intntom u \nabla v \cdot \nabla \varphi
  \end{align}
  and
  \begin{align*}
      \intntom \nabla v \cdot \nabla \varphi
    = - \mu \intntom \varphi
      + \intntom u \varphi
  \end{align*}
  hold for all $\varphi \in C_c^\infty(\Ombar \times [0, T_0))$.
\end{definition}
\begin{remark}\label{rm:weak_sol}
  Let $T_0 > 0$, let $u_0 \in \leb\infty$ be nonnegative and radially symmetric and let $(u, v)$ be a radial weak solution of \eqref{prob:main} in the sense of Definition~\ref{def:weak_sol}.
  \begin{enumerate}
    \item
      As also noted in \cite[Remark after Definition~2.1]{StevensWinklerTaxisdrivenPersistentLocalization2022},
      the weak formulation \eqref{eq:weak_sol:u_eq} implies that $u_t \in L_{\loc}^2([0, T_0); (\sob12)^\star)$ and hence (after redefining $u$ on a null set of times) $u \in C^0([0, T_0); \leb2)$.

    \item
      It has recently been shown in \cite[Corollary~1.9]{BlackRefiningHolderRegularity2024}
      that $u$ is locally (space-time) Hölder continuous in $\Ombar \times (0, T_0)$
      and that Hölder continuity of $u_0$ implies that $u$ is Hölder continuous up to $t=0$.
      (For $m \le 3$, the same conclusion can be obtained from the classical work \cite{PorzioVespriHolderEstimatesLocal1993}.)
  \end{enumerate}
\end{remark}

As they will play a crucial role in our coming arguments, let uns now briefly review some of the more intricate details presented in \cite{StevensWinklerTaxisdrivenPersistentLocalization2022} as part of the derivation of such weak solutions. As a matter of fact, the key ingredient used in their construction that we want to utilize is a family of approximate solutions to slightly regularized versions of the system in (\ref{prob:main}), which approach it as the approximation parameter tends to zero and consequently yield our desired solution as their limit. In particular, the central and only change to the system involves mitigation of the nonlinear degeneracy present in the diffusion operator of the first equation in (\ref{prob:main}). This not only enables the following straightforward global classical existence theory in the aforementioned reference but will also provide us with systems and corresponding classical solutions much more amenable to the comparison arguments at the core of our reasoning here.

\begin{lemma}\label{lm:local_ex_eps}
  Let $u_0 \in \leb\infty$ be nonnegative and radially symmetric.
  Then we can find $T_0 > 0$ such that for all $\eps \in (0, 1)$,
  a classical solution $(\ue, \ve)$ of 
  \begin{align}\label{prob:eps}
    \begin{cases}
      \uet = \nabla \cdot ( (\ue + \eps)^{m-1} \grad \ue - \ue \grad \ve)       & \text{in $\Omega \times (0, T_0)$}, \\
      0 = \laplace \ve - \mu + \ue, \quad \mu = \frac{1}{|\Omega|} \intom u_0,  & \text{in $\Omega \times (0, T_0)$}, \\
      \partial_\nu \ue = \partial_\nu \ve = 0                                   & \text{on $\partial\Omega \times (0, T_0)$}, \\
      \ue(\cdot, 0) = u_0                                                       & \text{in $\Omega$}
    \end{cases}
  \end{align}
  satisfying
  \begin{align*}
    \ue \in C^{2, 1}(\Ombar \times (0, T_0)) \cap C^0([0, T_0); \leb1), \quad
    \ve \in C^{2, 0}(\Ombar \times (0, T_0))
    \quad \text{and} \quad
    \intom \ve = 0
  \end{align*}
  exists, which is moreover such that $\ue(\cdot, t)$ and $\ve(\cdot, t)$ are radially symmetric for all $t \in (0, T_0)$
  and that
  \begin{align}\label{eq:local_ex_eps:bdd_u}
    0 < \ue \leq \|u_0\|_{\leb\infty} + 1
    \qquad \text{in $\Ombar \times (0, T_0)$}.
  \end{align}
  for all $\eps \in (0, 1)$.
\end{lemma}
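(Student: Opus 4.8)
\emph{Sketch of the intended proof.}
The plan is to obtain $(\ue, \ve)$ by a standard fixed-point construction for the regularized, \emph{nondegenerate} system \eqref{prob:eps} and then to read off the listed properties from the structure of the equations. A family of approximate solutions of exactly this type underlies the weak solution theory of \cite{StevensWinklerTaxisdrivenPersistentLocalization2022}, so I would primarily invoke their construction and concentrate on the only two features needed later that are not entirely routine: that the existence time $T$ and the bound \eqref{eq:local_ex_eps:bdd_u} can be chosen independently of $\eps$. The construction is routine precisely because, on any set on which $\ue$ is bounded and nonnegative, the principal part $\nabla \cdot \big( (\ue + \eps)^{m-1} \grad \ue \big)$ is uniformly parabolic, its coefficient lying between $\eps^{m-1} > 0$ and a bound depending only on $\|\ue\|_{\leb\infty}$.

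For the construction proper, fix $\eps \in (0, 1)$ and a small $T > 0$, put $M \defs \|u_0\|_{\leb\infty} + 1$, and work in the closed convex set $S$ of radially symmetric $u \in C^0(\Ombar \times (0, T]) \cap L^\infty(\Omega \times (0,T))$ with $0 \le u \le M$ and $\intom u(\cdot, t) = \intom u_0$ for all $t$, the initial data being taken in the mild sense relative to the Neumann heat semigroup (the natural framework since $u_0$ is merely bounded; note $e^{t\laplace} u_0 \in S$, so $S \neq \emptyset$). To $u \in S$ one associates first the unique radially symmetric $v = v[u]$ with $\intom v = 0$ solving $0 = \laplace v - \mu + u$ under homogeneous Neumann data (solvable since $\intom (u - \mu) = 0$, and with $\grad v[u]$ Hölder continuous by elliptic regularity), and then the solution $\Phi(u)$ of the \emph{linear} Neumann problem $\Phi(u)_t = \nabla \cdot \big( (u + \eps)^{m-1} \grad \Phi(u) - \Phi(u) \grad v[u] \big)$, $\Phi(u)(\cdot, 0) = u_0$, whose coefficients are frozen, bounded, and bounded below by $\eps^{m-1}$ in the principal part. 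Linear parabolic theory provides $\Phi(u)$ with enough regularity to proceed, and uniqueness forces it to be radially symmetric and (integrate and use $\partial_\nu = 0$) to conserve mass. Rewriting its equation via $\laplace v[u] = \mu - u$ as $\Phi(u)_t = \nabla \cdot \big( (u+\eps)^{m-1} \grad \Phi(u) \big) - \grad v[u] \cdot \grad \Phi(u) + \Phi(u)(u - \mu)$ and using $0 \le u \le M$, the comparison principle for this linear equation yields $0 \le \Phi(u) \le \|u_0\|_{\leb\infty} e^{Mt} \le M$ once $T$ is small in terms of $\|u_0\|_{\leb\infty}$ only; hence $\Phi \colon S \to S$. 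A Lipschitz estimate for $u \mapsto v[u]$ (elliptic regularity) and for the linear parabolic solution in terms of its frozen coefficients then makes $\Phi$ a contraction after a further, again $\eps$-independent, shrinking of $T$; its fixed point solves \eqref{prob:eps}, and standard (quasilinear) parabolic and elliptic regularity theory on $\Ombar \times [\tau, T]$, $\tau \in (0, T)$, upgrades it to $\ue \in C^{2, 1}(\Ombar \times (0, T))$ and $\ve \in C^{2, 0}(\Ombar \times (0, T))$, while $\ue \in C^0([0, T); \leb1)$ follows from the mild representation.

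It remains to pin down the two $\eps$-robust properties. The bound \eqref{eq:local_ex_eps:bdd_u} is built into $S$ and verified above; the value $\|u_0\|_{\leb\infty} + 1$ becomes transparent from the observation that at a spatial maximum of $\ue(\cdot, t)$ one has $\grad \ue = 0$ and $\laplace \ue \le 0$, so, using $\laplace \ve = \mu - \ue$, $\uet \le -\ue(\mu - \ue) = \ue(\ue - \mu) \le \ue^2$; thus $y(t) \defs \|\ue(\cdot, t)\|_{\leb\infty}$ satisfies $y' \le y^2$ and hence $y(t) \le \|u_0\|_{\leb\infty}/(1 - \|u_0\|_{\leb\infty} t) \le \|u_0\|_{\leb\infty} + 1$ for $t$ up to a time depending on $\|u_0\|_{\leb\infty}$ alone (the Neumann condition together with radial symmetry excluding a genuine boundary maximum). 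Strict positivity $0 < \ue$ in $\Ombar \times (0, T)$ then follows from the strong maximum principle and the Hopf lemma for the now uniformly parabolic equation governing $\ue$, using $u_0 \ge 0$ and $u_0 \not\equiv 0$. Since none of the finitely many choices of $T$ above involved $\eps$, a single $T > 0$ serves all $\eps \in (0, 1)$.

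The delicate point is not any individual estimate --- each is classical --- but the bookkeeping of $\eps$-dependence throughout: $\eps^{m-1}$ unavoidably enters the linear parabolic estimates used in the fixed-point step, so the existence time produced there would a priori degenerate as $\eps \to 0$, and one must ensure that the final $T$ is dictated solely by the reaction-type structure $\ue(\ue - \mu)$ --- which is $\eps$-free --- and not by the diffusion. The cleanest route, and the one I would actually take, is to borrow the existence statement from \cite{StevensWinklerTaxisdrivenPersistentLocalization2022}, check only that their existence time admits an $\eps$-independent lower bound, and then supply the short comparison argument above for \eqref{eq:local_ex_eps:bdd_u}.
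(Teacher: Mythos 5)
Your proposal is correct and ultimately lands on exactly the approach the paper takes: the paper's proof of this lemma is the single sentence ``See [Stevens--Winkler, Lemma~2.1],'' and you likewise conclude that the cleanest route is to borrow that existence statement and only verify the $\eps$-independence of $T$ and the bound \eqref{eq:local_ex_eps:bdd_u}. Your additional sketch of the fixed-point construction and the $L^\infty$ ODE comparison is a correct (and honestly more informative) expansion of what lies behind the cited result, but it does not constitute a different route.
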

\begin{proof}
  See \cite[Lemma~2.1]{StevensWinklerTaxisdrivenPersistentLocalization2022}.
\end{proof}

Using a series of testing procedures, sufficient convergence properties are then derived in \cite{StevensWinklerTaxisdrivenPersistentLocalization2022} for the family of approximate solutions constructed in Lemma~\ref{lm:local_ex_eps} to ensure that the weak solution properties they enjoy (due to them being classical solutions) survive the limit process in an appropriate fashion.

\begin{lemma}\label{lm:eps_sea_0}
  Let $u_0 \in \leb\infty$ be nonnegative and radially symmetric
  and let $T_0$ and $(\ue, \ve)_{\eps \in (0, 1)}$ be as given by Lemma~\ref{lm:local_ex_eps}.
  Then there exist a null sequence $(\eps_j)_{j \in \N} \subset (0, 1)$ and a radial weak solution $(u, v)$ of \eqref{prob:main} in the sense of Definition~\ref{def:weak_sol} such that
  \begin{alignat}{2}\label{eq:eps_sea_0:pw}
    \ue           & \to u           &  & \qquad \text{a.e.\ in } \Omega\times(0,T_0) \text{ as $\eps = \eps_j \sea 0$}.
  \end{alignat}
\end{lemma}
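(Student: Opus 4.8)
The plan is to obtain $(u,v)$ as a limit of the approximate solutions $(\ue,\ve)$ of \eqref{prob:eps} along a suitable null sequence, using uniform-in-$\eps$ a priori estimates, a compactness argument, and a limit passage in the weak formulation; as indicated before the lemma, the substance of this is already contained in the construction carried out in \cite{StevensWinklerTaxisdrivenPersistentLocalization2022}, so that it suffices to read off from there that some subsequence converges strongly enough for the nonlinear terms, the a.e.\ convergence \eqref{eq:eps_sea_0:pw} then following along a further subsequence. Concretely, I would proceed as follows. By \eqref{eq:local_ex_eps:bdd_u}, $(\ue)_{\eps\in(0,1)}$ is bounded in $L^\infty(\Omega\times(0,T))$, and elliptic regularity for the second equation in \eqref{prob:eps} (together with $\intom\ve=0$) bounds $(\ve)$ in $L^\infty((0,T);\sob2p)$ for every $p<\infty$, hence $(\grad\ve)$ in $L^\infty(\Omega\times(0,T))$. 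Testing the first equation in \eqref{prob:eps} with $\ue$ and using $\laplace\ve=\mu-\ue$ together with these bounds yields $\int_0^T\intom(\ue+\eps)^{m-1}|\grad\ue|^2\le C$ with $C$ independent of $\eps$; since $\ue^{m-1}\le(\ue+\eps)^{m-1}\le(\|u_0\|_{\leb\infty}+2)^{m-1}$ is bounded, this implies that $(\grad(\ue+\eps)^m)=(m(\ue+\eps)^{m-1}\grad\ue)$, and hence also $(\grad\ue^m)$, is bounded in $L^2(\Omega\times(0,T))$, so that $((\ue+\eps)^m)$ and $(\ue^m)$ are bounded in $L^2((0,T);\sob12)$. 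Rewriting the first equation of \eqref{prob:eps} as $\uet=\frac1m\laplace(\ue+\eps)^m-\grad\cdot(\ue\grad\ve)$, these bounds also give that $(\uet)$ is bounded in $L^2((0,T);(\sob12)^\star)$.

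With these estimates in place, I would extract a null sequence $(\eps_j)$ along which $\ue\to u$ and $\ue^m\to u^m$ strongly in $L^2(\Omega\times(0,T))$ and a.e., $\grad(\ue+\eps)^m\rh\grad u^m$ weakly in $L^2(\Omega\times(0,T))$, and $\ve\to v$ in $L^2((0,T);\sob12)$ with $\intom v=0$ (the convergence of $\ve$ being a consequence of that of $\ue$ via the elliptic estimates). The step I expect to be the main obstacle is the relative compactness of $(\ue)$ in, say, $L^2(\Omega\times(0,T))$: the uniform spatial regularity is available for $\ue^m$ and not for $\ue$ itself, so the classical Aubin--Lions lemma does not apply directly and one needs a version suited to degenerate diffusion. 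Concretely, the time translates of $\ue$ can be controlled by testing the equation with $\ue^m(\cdot,t+h)-\ue^m(\cdot,t)$ and invoking the elementary inequality $(a-b)(a^m-b^m)\ge|a-b|^{m+1}$ for $a,b\ge0$: combined with the bounds on $(\ue^m)$ in $L^2((0,T);\sob12)$ and on $(\uet)$ in $L^2((0,T);(\sob12)^\star)$, this yields $\|\ue(\cdot,\cdot+h)-\ue\|_{L^{m+1}(\Omega\times(0,T-h))}\to0$ as $h\to0$, uniformly in $\eps$. Together with the uniform smallness of the spatial translates of $\ue=(\ue^m)^{1/m}$ — which follows from the $L^2((0,T);\sob12)$-bound on $\ue^m$ and the subadditivity of $s\mapsto s^{1/m}$ — the Riesz--Fr\'echet--Kolmogorov criterion then gives compactness of $(\ue)$ in $L^{m+1}(\Omega\times(0,T))$, and a.e.\ convergence holds along a further subsequence. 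The limit $(u,v)$ has the regularity required by Definition~\ref{def:weak_sol}, is radially symmetric and nonnegative a.e.\ (both properties being inherited from the $(\ue,\ve)$), and attains $u_0$ as initial datum because $\ue(\cdot,0)=u_0$ for every $\eps$.

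It remains to let $\eps=\eps_j\sea0$ in the identities
\begin{align*}
  -\intntom\ue\varphi_t - \intom u_0\varphi(\cdot,0)
  &= -\frac1m\intntom\grad(\ue+\eps)^m\cdot\grad\varphi + \intntom\ue\grad\ve\cdot\grad\varphi, \\
  \intntom\grad\ve\cdot\grad\varphi &= -\mu\intntom\varphi + \intntom\ue\varphi,
\end{align*}
which hold for all $\varphi\in C_c^\infty(\Ombar\times[0,T))$ since each $(\ue,\ve)$ is a classical solution of \eqref{prob:eps}. All terms converge to their counterparts in Definition~\ref{def:weak_sol}: in the diffusion term, the a.e.\ convergence $\ue\to u$ together with the uniform $L^\infty$-bound identifies the weak $L^2$-limit of $\grad(\ue+\eps)^m$ as $\grad u^m$, while in the taxis term the a.e.\ convergence of $\ue$ and $\grad\ve$ combined with dominated convergence gives $\ue\grad\ve\to u\grad v$ in $L^2(\Omega\times(0,T))$. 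Hence $(u,v)$ is a radial weak solution of \eqref{prob:main} in the sense of Definition~\ref{def:weak_sol} satisfying \eqref{eq:eps_sea_0:pw}.
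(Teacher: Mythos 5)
The paper disposes of this lemma with a one-line citation to \cite[Lemma~2.3]{StevensWinklerTaxisdrivenPersistentLocalization2022}, which already contains the full construction of the radial weak solution as a limit of the regularized problems \eqref{prob:eps} together with the stated a.e.\ convergence. Your proposal is correct but takes the opposite editorial route: rather than citing, it reconstructs the compactness argument that such a reference must carry out. The ingredients you assemble --- the uniform $L^\infty$ bound \eqref{eq:local_ex_eps:bdd_u}, elliptic regularity for $\ve$, the energy identity giving $\int_0^T \intom (\ue+\eps)^{m-1}|\grad \ue|^2 \le C$, the resulting $L^2((0,T);W^{1,2})$ bound on $(\ue+\eps)^m$ and $\ue^m$, the dual-space bound on $\uet$, and especially the time-translate estimate via $(a-b)(a^m-b^m)\ge |a-b|^{m+1}$ paired with spatial translate control of $\ue=(\ue^m)^{1/m}$ to invoke Riesz--Fr\'echet--Kolmogorov --- are the standard tools for degenerate-parabolic strong compactness, and you correctly identify the compactness of $\ue$ itself (rather than $\ue^m$) as the step where a naive Aubin--Lions argument would fail. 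Your limit passage then correctly identifies $\grad u^m$ as the weak $L^2$-limit of $\grad(\ue+\eps)^m$ via a.e.\ convergence plus boundedness, and handles the taxis term by dominated convergence using the convergence of $\ve$ inherited from elliptic theory. What each approach buys is clear: the paper's citation is short and appropriate because the reference treats exactly this system, while your reconstruction is self-contained and makes explicit where the degeneracy actually bites.
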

\begin{proof}
  This has been shown in \cite[Lemma~2.3]{StevensWinklerTaxisdrivenPersistentLocalization2022}.
\end{proof}

Lastly, we prove a uniform continuity result in $t = 0$ to complement the other solution properties already laid out above by another testing procedure. This will later help us to localize our central comparison arguments around the boundary of the support of the initial data.

\begin{lemma}\label{lm:uniform_convergence}
  Let $u_0 \in \leb\infty$ be nonnegative and radially symmetric
  and let $T_0$ and $(\ue, \ve)_{\eps \in (0, 1)}$ be as given by Lemma~\ref{lm:local_ex_eps}.
  Then for every $r \in (0, R)$ and $\delta > 0$, there exists $t_0 \in (0,T_0)$ such that
  \[
    \left| \int_{B_r(0)} u_0 - \int_{B_r(0)} \ue(\cdot, t) \right| \leq \delta
  \]
  for all $t\in(0,t_0)$ and $\eps \in (0,1)$.
\end{lemma}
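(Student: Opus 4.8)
The plan is to test the first equation in \eqref{prob:eps} against a single, $\eps$-independent smooth cutoff function approximating $\1_{B_r(0)}$, and to invoke the uniform bound \eqref{eq:local_ex_eps:bdd_u} in two different roles. Given $r\in(0,R)$ and $\delta>0$, I would first pick $\eta\in(0,r)$ so small that $(\|u_0\|_{\leb\infty}+1)\,|B_r(0)\setminus B_{r-\eta}(0)|\le\tfrac{\delta}{4}$ and a radially symmetric $\varphi\in C_c^\infty(B_R(0))$ with $0\le\varphi\le1$, $\varphi\equiv1$ on $B_{r-\eta}(0)$ and $\supp\varphi\subset B_r(0)$. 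By \eqref{eq:local_ex_eps:bdd_u}, passing from $\int_{B_r(0)}$ to $\int_\Omega(\cdot)\varphi$ then costs at most $\tfrac\delta4$ both for $u_0$ and for $\ue(\cdot,t)$, uniformly in $\eps\in(0,1)$ and $t\in(0,T)$, so the problem reduces to estimating $\int_\Omega\ue(\cdot,t)\varphi-\int_\Omega u_0\varphi$.

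Next I would use the identity $(\ue+\eps)^{m-1}\grad\ue=\tfrac1m\grad\big((\ue+\eps)^m\big)$ to obtain, after multiplying the $\ue$-equation in \eqref{prob:eps} by $\varphi$ and integrating by parts (the boundary terms vanishing since $\varphi$ is supported away from $\partial\Omega$),
\[
  \ddt\int_\Omega\ue(\cdot,t)\varphi=\tfrac1m\int_\Omega(\ue+\eps)^m\laplace\varphi+\int_\Omega\ue\,\grad\ve\cdot\grad\varphi
  \qquad\text{for }t\in(0,T).
\]
The first term on the right is bounded by $\tfrac1m(\|u_0\|_{\leb\infty}+2)^m\|\laplace\varphi\|_{\leb1}$ by \eqref{eq:local_ex_eps:bdd_u}. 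For the second term, mass conservation $\int_\Omega\ue(\cdot,t)=\int_\Omega u_0=\mu|\Omega|$ makes the data of $0=\laplace\ve-\mu+\ue$ (with $\int_\Omega\ve=0$) compatible, so elliptic regularity together with $\sob2p\embed\sob1\infty$ for some $p>n$ and the $\eps$- and $t$-independent bound on $\|\ue(\cdot,t)\|_{\leb\infty}$ yields an $\eps$- and $t$-independent bound on $\|\grad\ve(\cdot,t)\|_{\leb\infty}$ (alternatively, in the radial setting this is immediate from $\partial_r\ve(r,t)=r^{1-n}\int_0^r\rho^{n-1}(\ue(\rho,t)-\mu)\drho$). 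Hence $\big|\ddt\int_\Omega\ue(\cdot,t)\varphi\big|\le C_\varphi$ with $C_\varphi$ depending only on $m,n,R,\varphi$ and $\|u_0\|_{\leb\infty}$. Integrating from $\tau$ to $t$, sending $\tau\sea0$ and using $\ue\in C^0([0,T);\leb1)$ from Lemma~\ref{lm:local_ex_eps}, I would conclude $\big|\int_\Omega\ue(\cdot,t)\varphi-\int_\Omega u_0\varphi\big|\le C_\varphi t$, so that $t_0\defs\min\{T,\tfrac\delta{2C_\varphi}\}$ does the job.

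I do not expect any genuinely hard step here. The one point that needs care is the order of quantifiers: the cutoff $\varphi$, and hence the constant $C_\varphi$, must be fixed (in terms of $\delta$) \emph{before} $t$ is sent to $0$, which is precisely what makes the uniform-in-$\eps$ bound \eqref{eq:local_ex_eps:bdd_u} — rather than merely $\eps$-dependent regularity of $\ue$ — indispensable both for controlling the indicator-to-cutoff error and for bounding $(\ue+\eps)^m$ and $\|\grad\ve\|_{\leb\infty}$. The compact support of $\varphi$ in $B_R(0)$ keeps the integration by parts boundary-term-free, and \eqref{eq:local_ex_eps:bdd_u} holding on all of $\Ombar\times(0,T)$ ensures the estimates are uniform down to $t=0$.
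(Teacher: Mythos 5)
Your proposal is correct and follows essentially the same route as the paper: test the $\ue$-equation against a smooth cutoff close to $\1_{B_r(0)}$, integrate in time, bound the integrand uniformly in $\eps$ and $t$ using \eqref{eq:local_ex_eps:bdd_u} together with a uniform bound on $\grad\ve$, and absorb the shoulder of the cutoff using the same $L^\infty$ bounds. The only cosmetic differences are that you place the shoulder of $\varphi$ inside $B_r(0)$ while the paper places it outside, and that you invoke an $L^\infty$ bound on $\grad\ve$ (via elliptic regularity or the radial integral formula) whereas the paper gets by with an $L^1$ bound obtained by testing the $v$-equation with $\ve$; neither difference is substantive. (Minor typo: in the radial formula $\partial_r\ve(r,t)$ equals $r^{1-n}\int_0^r\rho^{n-1}(\mu-\ue(\rho,t))\drho$, not $\ue-\mu$, though the resulting uniform bound is of course the same.)
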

\begin{proof}
  Using a similar testing-based approach to \cite[Lemma~2.3]{StevensWinklerTaxisdrivenPersistentLocalization2022} combined with the fundamental theorem of calculus, we can immediately see that
  \begin{align*}
    \left| \int_\Omega u_0\varphi - \intom \ue(\cdot, t) \varphi \right| &= \left| \int_0^t \int_\Omega \uet \varphi \right| \\
    %
    %
    &\leq \int_0^t \left( \tfrac{|\Omega|}{m} \|\ue + 1\|^m_\L{\infty} \|\laplace \varphi\|_\L{\infty} + \|\ue\|_\L{\infty} \|\grad \ve\|_\L{1} \|\grad \varphi\|_\L{\infty} \right)
  \end{align*}
  for all $\varphi \in C^\infty_c(\Omega)$, $t\in(0,T_0)$ and $\eps \in (0,1)$. By for instance testing the second equation in (\ref{prob:eps}) with $\ve$ to gain a uniform-in-time gradient bound for $\ve$ combined with the uniform bound for $\ue$ found in (\ref{eq:local_ex_eps:bdd_u}), we gain $C \equiv C(\|u_0\|_\L{\infty}) > 0$ such that 
  \begin{equation}\label{eq:uniform_convergence:test_func}
    \left| \int_\Omega u_0\varphi - \int_\Omega \ue(\cdot, t) \varphi \right| \leq C (\|\grad \varphi\|_\L{\infty} + \|\laplace \varphi\|_\L{\infty}) t
  \end{equation}
  for all $\varphi \in C^\infty_c(\Omega)$, $t\in(0,T_0)$ and $\eps \in (0,1)$.

  We now fix $r \in (0, R)$ and $\delta > 0$. We then choose a cutoff function $\varphi \in C_c^\infty(\Omega)$ with $\varphi \equiv 1$ on $B_r(0)$ and $\int_{\Omega\setminus B_r(0)} \varphi \leq \frac{\delta}{2(2\|u_0\|_\L{\infty}+1)}$. Using (\ref{eq:uniform_convergence:test_func}), we then fix $t_0 \in (0,T)$ such that 
  \[
    \left| \int_\Omega u_0\varphi - \int_\Omega \ue(\cdot, t) \varphi \right| \leq \frac{\delta}{2}
  \]
  for all $t\in(0,t_0)$ and $\eps \in (0,1)$.
  Combining these choices for $\varphi$ and $t_0$ with (\ref{eq:local_ex_eps:bdd_u}), we then gain
  \begin{align*}
    \left| \int_{B_r(0)} u_0 - \int_{B_r(0)} \ue(\cdot, t) \right| &= \left| \int_{B_r(0)} u_0\varphi - \int_{B_r(0)} \ue(\cdot, t)\varphi \right| \\
    &\leq \left| \int_{\Omega} u_0\varphi - \int_{\Omega} \ue(\cdot, t)\varphi \right| + \int_{\Omega\setminus B_r(0)} |u_0 - \ue(\cdot, t)| \varphi \\
    &\leq \frac{\delta}{2} + (2\|u_0\|_\L{\infty}+1)\int_{\Omega\setminus B_r(0)}\varphi \leq \delta
  \end{align*}
  for all $t\in(0,t_0)$ and $\eps \in (0,1)$. This completes the proof.
\end{proof}

For the remainder of the paper, we now fix a nonnegative, radially symmetric $u_0 \in \leb\infty$ and set $\mu \defs \frac{1}{|\Omega|} \intom u_0$.
Moreover, we fix the family of approximate solutions $(\ue, \ve)_{\eps \in (0,1)}$ with uniform existence time $\texist > 0$ constructed in Lemma~\ref{lm:local_ex_eps} as well as the limit solution $(u,v)$ found in Lemma~\ref{lm:eps_sea_0}.

\section{Mass accumulation functions}\label{sec:mass}
As pioneered in \cite{JagerLuckhausExplosionsSolutionsSystem1992} for systems simplified in a similar fashion to (\ref{prob:main}) and also considered in radial settings, we will from now on mostly focus on an derived quantity instead of the functions $\ue$ and $\ve$ themselves. Namely, we consider the mass accumulation functions 
\[
  \we(s,t) \defs n \int_0^{s^\frac1n} \rho^{n-1} \ue(\rho, t) \drho \quad \text{ and similarly }\quad w(s,t) \defs n \int_0^{s^\frac1n} \rho^{n-1} u(\rho, t) \drho
\]
for all $(s,t) \in [0,R^n]\times[0,\texist]$ and $\eps \in (0,1)$. Making use of radial symmetry, a straightforward computation then yields that $\we$ solves the parabolic differential equation 
\begin{equation}\label{eq:Peps_equation}
  0 = \Peps \we =  \partial_t \we
  - n^2 s^{2-\frac{2}{n}} (\partial_s \we + \eps)^{m-1} \partial_{ss} \we
  - \we \partial_s \we
  + \mu s \partial_s \we
\end{equation}
on $[0,R^n]\times[0,\texist]$ with initial data $w_0(s) \defs n \int_0^{s^\frac1n} \rho^{n-1} u_0(\rho) \drho$. Additionally, it is immediately obvious from the nonnegativity of $\ue$ as well as the mass conservation properties of (\ref{prob:eps}) that $\we(\cdot, t)$ is monotonically increasing and $\we(0, t) = 0$ as well as $\we(R^n, t) = \mu R^n$ for all $t\in[0,\texist]$.  

Let us now further note that the a.e.\ pointwise convergence properties of the family $(\ue)_{\eps\in(0,1)}$ toward $u$ in (\ref{eq:eps_sea_0:pw}) translate in a sensible way to these newly introduced quantities. In fact along the null sequence $(\eps_j)_{j \in \N}$ constructed in Lemma~\ref{lm:eps_sea_0}, we gain that 
\begin{equation}\label{eq:w_pw_conv}
  \we(s, t) \rightarrow w(s, t) \quad \text{ as } \eps = \eps_j \searrow 0
\end{equation}
for all $s\in[0, R^n]$ and a.e.\ $t\in[0,\texist]$ by an application of Lebesgue's theorem, which is possible due to the uniform bound seen in (\ref{eq:local_ex_eps:bdd_u}). Notably, this directly implies that 
\begin{equation}\label{eq:w_upper_bound}
  w(s, t) \leq \mu R^n \quad \text{ and } \quad w(R^n, t) = \mu R^n
\end{equation}
for all $s\in[0, R^n]$ and a.e.\ $t\in[0,\texist]$ due to these property uniformly holding for the approximate functions $\we$.

Next, we investigate how our central initial data conditions (\ref{eq:cond_shrinking}) and (\ref{eq:cond_expanding}) in Theorem~\ref{th:main} translate from $u_0$ to its mass accumulation counterpart $w_0$.

\begin{lemma}\label{lm:crit_w0}
  Let $u_0 \in \leb\infty$ be nonnegative and radially symmetric with $\ess\supp u_0 \subseteq \overline{B_{r_1}(0)}$ for some $r_1 \in (0, R)$. If (\ref{eq:cond_shrinking}) holds for some $A > 0$ and $r_0 \in (0,r_1)$, then 
  \begin{equation}\label{eq:cond_shrinking_w0}
    w_0(s) \geq \mu R^n - \frac{A n^{-\frac{1}{m-1}} r_0^{-\frac{(n-1)m}{m-1}} r_1^{n-1} (m-1)}{m} (r_1^n - s)^\frac{m}{m-1} \quad \text{ for all } s \in (r_0^n, r_1^n).
  \end{equation}
  Similarly, if (\ref{eq:cond_expanding}) holds for some $A > 0$ and $r_0 \in (0,r_1)$, then 
  \begin{equation}\label{eq:cond_expanding_w0}
    w_0(s) \leq \mu R^n - \frac{A n^{-\frac{1}{m-1}} r_1^{-\frac{(n-1)m}{m-1}} r_0^{n-1} (m-1)}{m} (r_1^n - s)^\frac{m}{m-1} \quad \text{ for all } s \in (r_0^n, r_1^n).
  \end{equation}
\end{lemma}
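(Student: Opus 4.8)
The plan is to directly estimate $w_0(s) = n\int_0^{s^{1/n}} \rho^{n-1} u_0(\rho)\dr$ by exploiting that $\mu R^n = w_0(R^n) = n\int_0^R \rho^{n-1} u_0(\rho)\dr$, so that the difference $\mu R^n - w_0(s)$ is exactly the tail integral $n\int_{s^{1/n}}^R \rho^{n-1} u_0(\rho)\dr$. Since $\ess\supp u_0 \subseteq \overline{B_{r_1}(0)}$, for $s \in (r_0^n, r_1^n)$ this tail integral reduces to $n\int_{s^{1/n}}^{r_1} \rho^{n-1} u_0(\rho)\dr$, and on this range of $\rho$ the pointwise bound \eqref{eq:cond_shrinking} applies.

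First I would write, for $s \in (r_0^n, r_1^n)$,
\begin{align*}
  \mu R^n - w_0(s)
  = n \int_{s^{1/n}}^{r_1} \rho^{n-1} u_0(\rho) \dr
  \le n \int_{s^{1/n}}^{r_1} \rho^{n-1} A (r_1 - \rho)^{\frac{1}{m-1}} \dr,
\end{align*}
using $u_0(\rho) \le A(r_1-\rho)^{1/(m-1)}$ for $\rho \in (r_0, r_1) \supseteq (s^{1/n}, r_1)$. Next I would bound $\rho^{n-1} \le r_1^{n-1}$ on this interval and pull it out, leaving $A r_1^{n-1} n \int_{s^{1/n}}^{r_1}(r_1-\rho)^{1/(m-1)}\dr = A r_1^{n-1} n \cdot \frac{m-1}{m}(r_1 - s^{1/n})^{m/(m-1)}$. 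The remaining task is to control $(r_1 - s^{1/n})^{m/(m-1)}$ by a constant times $(r_1^n - s)^{m/(m-1)}$. This follows from the elementary inequality $r_1 - s^{1/n} \le n^{-1} r_0^{-(n-1)} (r_1^n - s)$ for $s \in (r_0^n, r_1^n)$: indeed, writing $t = s^{1/n} \in (r_0, r_1)$, by the mean value theorem $r_1^n - t^n = n\xi^{n-1}(r_1 - t)$ for some $\xi \in (t, r_1) \subset (r_0, r_1)$, so $r_1^n - t^n \ge n r_0^{n-1}(r_1 - t)$, i.e. $r_1 - t \le n^{-1} r_0^{-(n-1)}(r_1^n - t^n)$. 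Raising both sides to the power $m/(m-1)$ and substituting gives exactly the constant $A n^{-1/(m-1)} r_0^{-(n-1)m/(m-1)} r_1^{n-1}(m-1)/m$ claimed in \eqref{eq:cond_shrinking_w0}. The expanding case \eqref{eq:cond_expanding_w0} is entirely symmetric: one uses $u_0(\rho) \ge A(r_1-\rho)^{1/(m-1)}$, bounds $\rho^{n-1} \ge r_0^{n-1}$ from below, and in the final step uses $r_1 - s^{1/n} \ge n^{-1} r_1^{-(n-1)}(r_1^n - s)$ (again from the mean value theorem, now estimating $\xi \le r_1$), which produces the constant with $r_1^{-(n-1)m/(m-1)} r_0^{n-1}$ in place of $r_0^{-(n-1)m/(m-1)} r_1^{n-1}$.

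There is no real obstacle here — the statement is a routine change of variables plus two applications of the mean value theorem — but the one point requiring a little care is keeping the direction of all the inequalities consistent between the two cases (upper vs. lower bounds on $\rho^{n-1}$, and the two opposite elementary estimates relating $r_1 - s^{1/n}$ and $r_1^n - s$), and checking that the tail-integral identity $\mu R^n - w_0(s) = n\int_{s^{1/n}}^{r_1}\rho^{n-1}u_0(\rho)\dr$ genuinely uses $\ess\supp u_0 \subseteq \overline{B_{r_1}(0)}$ so that the contribution from $(r_1, R)$ vanishes.
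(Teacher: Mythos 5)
Your proof is correct and follows essentially the same route as the paper: decompose $\mu R^n - w_0(s)$ as a tail integral over $(s^{1/n}, r_1)$, insert the pointwise bound on $u_0$, bound $\rho^{n-1}$ by $r_1^{n-1}$ (resp.\ $r_0^{n-1}$), integrate exactly, and then relate $r_1 - s^{1/n}$ to $r_1^n - s$ via the mean value theorem. The only cosmetic difference is that you apply the mean value theorem to $x \mapsto x^n$ while the paper applies it to $x \mapsto x^{1/n}$; these yield the identical estimate.
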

\begin{proof}
  Assuming that (\ref{eq:cond_shrinking}) holds for some $A > 0$ and $r_0 \in (0,r_1)$, a direct computation yields
  \begin{align*}
          w_0(s)
    &=    n \int_0^{s^\frac1n} \rho^{n-1} u_0(\rho) \drho
     =    n \int_0^{R} \rho^{n-1} u_0(\rho) \drho
          - n \int_{s^\frac1n}^{r_1} \rho^{n-1} u_0(\rho) \drho \\
    &\ge  \mu R^n - A n r_1^{n-1} \int_{s^\frac1n}^{r_1} (r_1 - \rho)^\frac{1}{m-1} \drho \\
    &=    \mu R^n - \frac{A n r_1^{n-1}(m-1)}{m} ((r_1^n)^\frac1n - s^\frac1n)^{\frac{m}{m-1}} \\
    &\ge  \mu R^n - \frac{A n^{-\frac{1}{m-1}}r_0^{-\frac{(n-1)m}{m-1}} r_1^{n-1} (m-1)}{m} (r_1^n - s)^{\frac{m}{m-1}}
    \qquad \text{for all $s \in (r_0^n, r_1^n)$},
  \end{align*}
  where in the last step we applied the mean value theorem, which for all $0 < a < b$ asserts the existence of $\xi \in (a, b)$ with $b^\frac1n-a^\frac1n = \frac1n \xi^{\frac{1-n}{n}} (b-a)$.

  If on the other hand (\ref{eq:cond_expanding}) holds for some $A > 0$ and $r_0 \in (0,r_1)$, (\ref{eq:cond_expanding_w0}) is obtained by an analogous argument.
\end{proof}
\begin{remark}
  We observe that the coefficients of $(r_1^n - s)^\frac{m}{m-1}$ in (\ref{eq:cond_shrinking_w0}) and (\ref{eq:cond_expanding_w0}) coincide in the limit $r_0 \nearrow r_1$ for any fixed $A > 0$. Thus if sufficiently localized to $r_1$, (\ref{eq:cond_shrinking_w0}) and (\ref{eq:cond_expanding_w0}) are still essentially as complimentary as the original conditions on $u_0$.
\end{remark}

\section{Comparison argument}\label{sec:cmp}
Our aim now is to find a (potentially short) time $T \in (0,\texist)$ and constants $C, \theta > 0$ such that $w$ stays above or below functions of the general prototype
\begin{equation}\label{eq:comp_function_proto}
  (s,t) \mapsto \begin{cases}
    \mu R^n- C (r_1^n \pm \theta t - s)^{\frac{m}{m-1}}& \text{if } s < r_1^n \pm \theta t, \\
    \mu R^n &\text{if } s \geq r_1^n \pm \theta t
  \end{cases}
\end{equation}
on $(r_0^n, R^n)$ under the assumption that such an ordering already holds true at $t = 0$ for a constant $C > 0$ either larger or smaller (depending on the case) than the critical constant 
\begin{equation}\label{eq:Ccrit_def}
  \Ccrit(r_1) \defs \frac{m-1}{m}\left(\frac{\mu(R^n - r_1^n)(m-1)}{r_1^{2n - 2} n^2}\right)^\frac{1}{m-1}.  
\end{equation}

This shall be achieved by comparing the approximate solutions $\we$ to suitably adapted versions of \eqref{eq:comp_function_proto}, which, however, are nonsmooth, so that we need to ensure the applicability of a suitable comparison principle. To this end, we will utilize the following lemma which is built on the more general comparison result seen in \cite[Lemma~5.1]{bellomoFinitetimeBlowdegenerateChemotaxis2017}. (We note that the comparison theorem presented in \cite[Lemma~2.2]{StevensWinklerTaxisdrivenPersistentLocalization2022} is also based on \cite[Lemma~5.1]{bellomoFinitetimeBlowdegenerateChemotaxis2017}, but requires the comparison functions to be strictly increasing and is hence not suitable for all of our purposes.)

\begin{lemma}\label{lm:cmp_princ}
  Let $\eps \in (0,1)$, $a, b \in \R$ with $a < b$ and $T > 0$, and suppose that $\ul w,\, \ol w \in C^0([a, b] \times [0, T)) \cap C^1((a,b) \times (0, T))$ with $\partial_s \ul w,\, \partial_s \ol w \in L_{\loc}^\infty([a,b] \times [0, T))$ 
  satisfy
  \begin{align}
    \partial_s \ul w \geq 0
    \quad \text{and} \quad
    \partial_s \ol w \geq 0
    \qquad \text{in $(a, b) \times (0, T)$} \label{eq:cmp_princ:nondec} 
  \end{align}
  and
  \begin{align}
    \ul w(\cdot, t),\, \ol w(\cdot, t) \in W_{\loc}^{2, \infty}((a, b)) \qquad \text{for all $t \in (0, T)$}. 
  \end{align}
  If
  \begin{alignat}{2}
    (\Peps \ul w)(s, t)       & \le 0 &  & \qquad \text{for a.e.\ $s \in (a, b)$ and all $t \in (0, T)$}, \label{eq:cmp_princ:subsol} \\
    (\Peps \ol w)(s, t)       & \ge 0 &  & \qquad \text{for a.e.\ $s \in (a, b)$ and all $t \in (0, T)$}, \label{eq:cmp_princ:supersol}\\
    \ul w(s, 0) - \ol w(s, 0) & \le 0 &  & \qquad \text{for all $s \in (a, b)$},  \label{eq:cmp_princ:initial_ord} \\
    \ul w(s, t) - \ol w(s, t) & \le 0 &  & \qquad \text{for all $s \in \{a, b\}$ and $t \in (0, T)$}  \label{eq:cmp_princ:boundary_ord},
  \intertext{then}
    \ul w(s, t) - \ol w(s, t) & \le 0 &  & \qquad \text{for all $s \in (a, b)$ and $t \in (0, T)$} \label{eq:cmp_princ:ord}.
  \end{alignat}
\end{lemma}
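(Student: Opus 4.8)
The plan is to obtain \eqref{eq:cmp_princ:ord} from a standard energy estimate --- in essence the one behind \cite[Lemma~5.1]{bellomoFinitetimeBlowdegenerateChemotaxis2017} --- by testing the difference of the two differential inequalities against the positive part of $z \defs \ul w - \ol w$. Since $\partial_s\ul w, \partial_s\ol w \ge 0$ by \eqref{eq:cmp_princ:nondec}, the diffusion term $n^2 s^{2-\frac2n}(\partial_s w + \eps)^{m-1}\partial_{ss}w$ equals $n^2 s^{2-\frac2n}\partial_s[G(\partial_s w)]$ with $G(p) \defs \int_0^p (\sigma+\eps)^{m-1}\diff\sigma$; note that $G$ is smooth with $G'(p) = (p+\eps)^{m-1} > 0$ for $p \ge 0$, hence strictly increasing there. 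Subtracting \eqref{eq:cmp_princ:supersol} from \eqref{eq:cmp_princ:subsol} therefore yields
\[
  \partial_t z - n^2 s^{2-\frac2n}\partial_s\big[G(\partial_s\ul w) - G(\partial_s\ol w)\big] - \ul w\,\partial_s z - z\,\partial_s\ol w + \mu s\,\partial_s z \le 0 \qquad\text{a.e.\ in }(a,b)\times(0,T).
\]
I would multiply this by $z_+\eta_\delta(s)$, where $z_+ \defs \max\{z,0\}$ and $\eta_\delta \in C_c^\infty((a,b))$ is a cutoff with $0 \le \eta_\delta \le 1$ increasing to $1$, integrate over $(a,b)$, and integrate by parts in $s$ in every term carrying an $s$-derivative of $\ul w$ or $\ol w$; this transfers off $z$ the second $s$-derivative, which the solutions possess only in $L^\infty_{\loc}((a,b))$. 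All boundary terms from these integrations by parts vanish: $\eta_\delta$ has compact support, and in the limit $\delta \searrow 0$ the endpoint contributions disappear because $z_+ = 0$ at $s = a$ and $s = b$ by the boundary ordering \eqref{eq:cmp_princ:boundary_ord}.

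The point that makes the estimate close is the coercivity of the principal term so produced, namely
\[
  -\int_a^b n^2 s^{2-\frac2n}\,\1_{\{z>0\}}\,(\partial_s\ul w - \partial_s\ol w)\big((\partial_s\ul w+\eps)^{m-1} - (\partial_s\ol w+\eps)^{m-1}\big)\diff s \;\le\; -c\,\|\partial_s z_+(\cdot,t)\|_{L^2}^2,
\]
which holds for some $c > 0$ because $p \mapsto (p+\eps)^{m-1}$ is increasing and $\partial_s\ul w+\eps, \partial_s\ol w+\eps$ lie in a fixed compact subset of $(0,\infty)$ (and, for $n \ge 2$, because $s^{2-\frac2n} \ge a^{2-\frac2n} > 0$; for $n = 1$ this weight is constant). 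Everything else is then routine: the term stemming from $\partial_s(s^{2-\frac2n})$ --- present only for $n \ge 2$ --- is absorbed into this coercive term by Young's inequality after using $|(\partial_s\ul w+\eps)^{m-1} - (\partial_s\ol w+\eps)^{m-1}| \le C|\partial_s z_+|$; a further integration by parts turns $-\int \ul w\,\partial_s z\,z_+$ into $\tfrac12\int\partial_s\ul w\,z_+^2 \ge 0$, which is discarded; and $\int\partial_s\ol w\,z_+^2$ as well as the contribution $\tfrac\mu2\|z_+(\cdot,t)\|_{L^2}^2$ of $\mu s\,\partial_s z$ are bounded by $C\|z_+(\cdot,t)\|_{L^2}^2$, with $C$ depending only on $\eps, a, b, n, m, \mu$ and the suprema of $\ul w, \ol w, \partial_s\ul w, \partial_s\ol w$ on $[a,b]\times[0,T']$ for the fixed $T' \in (0,T)$ in play --- all finite by hypothesis. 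One thus reaches $\ddt\|z_+(\cdot,t)\|_{L^2}^2 \le C\|z_+(\cdot,t)\|_{L^2}^2$ on $(0,T')$; since $\|z_+(\cdot,0)\|_{L^2} = 0$ by \eqref{eq:cmp_princ:initial_ord}, Grönwall's lemma forces $z_+ \equiv 0$ on $(a,b)\times[0,T']$, and as $T' < T$ was arbitrary, this is \eqref{eq:cmp_princ:ord}.

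I expect the real work to be not the algebra above but making it rigorous in the stated low-regularity class: $\ul w, \ol w$ are only $C^1$, so $\partial_{ss}\ul w, \partial_{ss}\ol w$ are merely in $L^\infty_{\loc}((a,b))$ --- possibly blowing up towards $\{a,b\}$ --- and $\partial_t z$ is uncontrolled near $t = 0$ and near $\{a,b\}$. This is handled in the usual way: keep the spatial cutoff $\eta_\delta$ throughout (on $\supp\eta_\delta$ all quantities are bounded and the integrations by parts are unproblematic), replace $z_+$ by a smooth convex approximation before passing to the limit, and perform the time integration on $[\tau,T']$ with $\tau > 0$ before letting $\tau \searrow 0$, using $z \in C^0([a,b]\times[0,T))$. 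The one delicate verification is that the terms generated by $\partial_s\eta_\delta$ vanish as $\delta \searrow 0$; this is true precisely because $z_+$ is Lipschitz on the \emph{closed} interval $[a,b]$ --- which is why the hypotheses demand $\partial_s\ul w, \partial_s\ol w \in L^\infty_{\loc}([a,b]\times[0,T))$ rather than only on $(a,b)$ --- and vanishes at $\{a,b\}$, so that $z_+ = O(\operatorname{dist}(s,\{a,b\}))$ there. I note that this is exactly where \cite[Lemma~5.1]{bellomoFinitetimeBlowdegenerateChemotaxis2017} and \cite[Lemma~2.2]{StevensWinklerTaxisdrivenPersistentLocalization2022} require the comparison functions to be strictly increasing; here this is unnecessary, since the regularization parameter $\eps > 0$ already makes $\Peps$ uniformly parabolic away from $s = 0$.
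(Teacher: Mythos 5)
Your argument is correct in substance but takes a genuinely different route from the paper's. You re-derive from scratch the $L^2$-energy estimate underlying \cite[Lemma~5.1]{bellomoFinitetimeBlowdegenerateChemotaxis2017}, testing the difference of the two differential inequalities against a cut-off version of $(\ul w - \ol w)_+$ and closing by Gr\"onwall. The paper instead \emph{reduces} the statement to that cited lemma by an affine change of variables together with the addition of the linear shift $\eps\tfrac{b-a}{b}s$ to both $\ul w$ and $\ol w$: this maps $[a,b]$ onto $[0,b]$, preserves the ordering at the parabolic boundary and the differential inequalities (with a suitably transformed nonlinearity $\Phi$), and --- decisively --- makes the modified functions \emph{strictly} $s$-increasing, which is precisely the hypothesis that \cite[Lemma~5.1]{bellomoFinitetimeBlowdegenerateChemotaxis2017} and \cite[Lemma~2.2]{StevensWinklerTaxisdrivenPersistentLocalization2022} impose. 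You correctly observe that this hypothesis is dispensable here because the $\eps$-regularization already renders $\Peps$ uniformly parabolic; the paper's linear shift is simply a way of packaging that same observation so that the cited result applies verbatim. Your version is more self-contained and more transparent about the mechanism; the paper's is considerably shorter at the price of invoking the cited lemma as a black box. One small slip worth fixing: after integrating the divergence-form diffusion term $n^2 s^{2-2/n}\partial_s[G(\partial_s w)]$ by parts against $z_+\eta_\delta$, the coercive factor one obtains is $(\partial_s\ul w - \partial_s\ol w)\bigl(G(\partial_s\ul w) - G(\partial_s\ol w)\bigr)$ with $G(p) = \tfrac1m\bigl((p+\eps)^m - \eps^m\bigr)$, not the difference of $G'$-values $(\partial_s\ul w+\eps)^{m-1} - (\partial_s\ol w+\eps)^{m-1}$ as you wrote; the coercivity conclusion is unaffected since $G' = (\cdot+\eps)^{m-1} \ge \eps^{m-1}$ on $[0,\infty)$, but the displayed formula should be corrected.
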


\begin{proof}
  To put our case into the framework of the more general comparison principle presented in \cite[Lemma~5.1]{bellomoFinitetimeBlowdegenerateChemotaxis2017}, we let 
  \[
    \ol W(s,t) \defs \ol w\left( \tfrac{b-a}{b}s + a, t\right) + \eps \tfrac{b-a}{b} s \quad \text{ and } \quad \ul W(s,t) \defs \ul w\left( \tfrac{b-a}{b}s + a, t\right) + \eps \tfrac{b-a}{b} s
  \]
  for all $(s, t) \in [0,L]\times[0,T)$ with $L \defs b$. As we have only modified the original functions $\ol w$, $\ul w$ in a linear fashion, the new functions $\ol W$, $\ul W$ naturally retain all of our assumed regularity properties as well as the proper ordering at the parabolic boundary of $[0,L]\times[0,T)$ due to (\ref{eq:cmp_princ:initial_ord}) as well as (\ref{eq:cmp_princ:boundary_ord}). Due to (\ref{eq:cmp_princ:nondec}) and $a < b$, it is further directly evident that $\partial_s \ol W > 0$ and $\partial_s \ul W > 0$ in $(0,L)\times(0,T)$. Finally due to (\ref{eq:cmp_princ:subsol}) and (\ref{eq:cmp_princ:supersol}), it follows by direct computation that 
  \[
    \partial_t \ul W \leq \Phi(s,t,\ul W,\partial_s \ul W, \partial_{ss} \ul W) \quad \text{ and } \quad \partial_t \ol W \geq \Phi(s,t,\ol W,\partial_s \ol W, \partial_{ss} \ol W)
  \]
  for a.e.\ $s \in (0,L)$ and all $t\in(0,T)$ with 
  \[
    \Phi(s,t,y_0,y_1,y_2) \defs n^2 \left( \tfrac{b-a}{b}s + a \right)^{2-\frac{2}{n}}\left(\tfrac{b}{b-a}\right)^{m+1} y_1^{m-1} y_2 + \left(y_0 - \eps \tfrac{b-a}{b} s - \mu(\tfrac{b-a}{b}s + a)\right)(\tfrac{b}{b-a}y_1 - \eps)
  \]
  for all $(s,t,y_0,y_1,y_2) \in G\defs (0,L)\times(0,T)\times\R\times(0,\infty)\times\R$. Moreover, we see that \[
    \frac{\partial \Phi}{\partial y_2}(s,t,y_0,y_1,y_2) \geq 0 \quad \text{ and } \quad \left|\frac{\partial \Phi}{\partial y_0}(s,t,y_0,y_1,y_2)\right| \leq \frac{b}{b-a}y_1 + 1  
  \]
  for all $(s,t,y_0,y_1,y_2) \in G$ as well as that $\frac{\partial \Phi}{\partial y_1}(\cdot, t, \cdot, \cdot, \cdot)$ is bounded on every compact set in $(0,L)\times\R\times(0,\infty)\times \R$ for all $t\in(0,T)$. Thus, our desired ordering property for $\ol W$ and $\ul W$ on $(0,L)\times(0,T)$ and therefore (\ref{eq:cmp_princ:ord}) follow from \cite[proof of Lemma~5.1]{bellomoFinitetimeBlowdegenerateChemotaxis2017} which, as already observed  in \cite[directly before Lemma~2.2]{StevensWinklerTaxisdrivenPersistentLocalization2022}, implies the conclusion of \cite[Lemma~5.1]{bellomoFinitetimeBlowdegenerateChemotaxis2017} also for the slightly weaker regularity conditions on $\ul w$ and $\ol w$ imposed here.
\end{proof}

\subsection{Case of support shrinking}
Having now established the comparison principle at the center of this section, we start treating the support shrinking case by showing that $w$ lies above a function of the type seen in (\ref{eq:comp_function_proto}) with a negative coefficient for $\theta$ under an appropriate assumption at $t = 0$. 
This will then imply $w(s, t) = \mu R^n$ and hence $u(s^\frac1n, t) = 0$ for a.e.\ sufficiently small time $t$ and $s \ge r_1^n - \theta t$;
that is, that the support indeed initially shrinks.

As already mentioned previously, our first step in this endeavour is to construct a family of subsolutions for the approximate functions $\we$ by modifying the prototypical function in (\ref{eq:comp_function_proto}) to make it compatible with the regularized equation (\ref{eq:Peps_equation}). We first note that due to the diffusion operator in our regularized system (\ref{prob:eps}) no longer being degenerate, our approximate solutions $\ue$ become immediately positive. This in turn means that $\we(\cdot, t)$ is strictly monotonically increasing. As further $\we(R^n, t) = \mu R^n$ for all $t\in[0,\texist)$, we will thus need to replace the constant extension in (\ref{eq:comp_function_proto}) by something increasing in a similar fashion to have any chance for the resulting function to be a subsolution. In fact for our approximate subsolutions, we will extend the left part of (\ref{eq:comp_function_proto}) by a linearly increasing function by moving the extension point slightly to the left of $r_1^n - \theta t$ where the first derivate is still positive. We then move the resulting function slightly down to ensure that the right boundary value is still sufficiently close to $\mu R^n$ on small time scales. Conveniently, this modification also allows us to work around the potential singularity present in the second derivative of (\ref{eq:comp_function_proto}) at $r_1^n - \theta t$. The remaining modifications and parameter choices are mostly in service of allowing us to use the optimal value for $\Ccrit$.

\begin{lemma}\label{lm:wsub_construction}
  Let $r_1 \in (0, R)$ and $\Asub \in (0, \Ccrit(r_1))$ with $\Ccrit(r_1)$ as in (\ref{eq:Ccrit_def}).
  Then there exist $r_\mathrm{min} \in (0,r_1)$, $\theta_\mathrm{max} > 0$, $\kappa > 0$ and $\eps_0 \in (0,1)$ with $2\kappa\eps_0 < \mu$ such that
  \begin{equation}\label{eq:wsub_def_delta_eta}
    \delta \defs \delta(\eps) \defs \left(\frac{\eps\kappa(m-1)}{\Asub m}\right)^{m-1}
    \quad \text{and} \quad
    \eta \defs \eta(\eps) \defs -\Asub \delta^\frac{m}{m-1} + \eps\kappa(R^n - r_1^n + \delta) 
  \end{equation}
  are such that $\eta(\eps) \geq 0$ for all $\eps \in (0, \eps_0)$,
  and that moreover the following holds for all $\eps \in (0, \eps_0)$, $r_0 \in [r_\mathrm{min}, r_1)$ and $\theta \in [0, \theta_\mathrm{max}]$:

  Let
  \begin{equation}\label{eq:wsub_definition}
    \wsub(s,t) \defs \begin{cases}
      \mu R^n - \eta - \Asub(\rfn(t) - s)^\frac{m}{m-1} \sfed \wsubmid(s,t) & \text{if }  s < \rfn(t) - \delta  \\
      \mu R^n - \eps \kappa (R^n - \theta t - s) \sfed \wsubout(s,t) & \text{if } s \geq \rfn(t) - \delta 
    \end{cases}
  \end{equation}
  for all $(s,t) \in [r_0^n, R^n]\times[0,\infty)$,
  where $\rho(t) \defs r_1^n -\theta t$ for $t \in [0,\infty)$.
  Then $\wsub \in C^1([r_0^n, R^n]\times[0,\infty))$ with $\partial_s \wsub \geq \eps\kappa > 0$ and $\wsub(\cdot,t) \in W_\loc^{2,\infty}((r_0^n, R^n))$ for all $t \in [0, \infty)$.
  If further
  \begin{equation}\label{eq:wsub_theta_condition}
    \theta \leq (\mu-2\eps_0\kappa)(R^n - r_2^n)
  \end{equation}
  for some $r_2 \in (r_1, R]$, then
  \[ 
    \Peps \wsub(s,t) \leq -\eps\kappa (R^n - r_2^n) \partial_s \wsub(s,t) \leq 0
  \]
  for all $s \in [r_0^n, r_2^n]\setminus\{\rfn(t) - \delta \}$ and $t\in[0,\infty)$.
\end{lemma}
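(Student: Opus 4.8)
The plan is to establish, in turn, the gluing/regularity statements, the differential inequality on each of the two branches of \eqref{eq:wsub_definition}, and finally the admissible choices of $r_{\mathrm{min}},\theta_{\mathrm{max}},\kappa,\eps_0$. First I would observe that the quantities $\delta$ and $\eta$ in \eqref{eq:wsub_def_delta_eta} are designed precisely so that the two branches glue together in a $C^1$ fashion along the interface $\{s=\rho(t)-\delta\}$: writing $\sigma\defs\rho(t)-s$, a direct differentiation shows that the demand $\partial_s\wsubmid=\Asub\tfrac{m}{m-1}\sigma^{\frac1{m-1}}=\eps\kappa=\partial_s\wsubout$ at $\sigma=\delta$ forces exactly the stated formula for $\delta$, and that, granted this, the demand $\wsubmid=\wsubout$ at $\sigma=\delta$ forces exactly the stated formula for $\eta$; the matching of $\partial_t$ at the interface is then automatic (differentiate the identity $\wsubmid(\rho(t)-\delta,t)=\wsubout(\rho(t)-\delta,t)$ in $t$ and use the matching of the value and of $\partial_s$). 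Hence $\wsub\in C^1([r_0^n,R^n]\times[0,\infty))$, and since $\partial_s\wsubmid=\Asub\tfrac{m}{m-1}\sigma^{\frac1{m-1}}$ is nondecreasing in $\sigma$ with $\sigma\geq\delta$ on the mid-branch while $\partial_s\wsubout\equiv\eps\kappa$, we obtain $\partial_s\wsub\geq\eps\kappa>0$. For each fixed $t$ the membership $\wsub(\cdot,t)\in W_{\loc}^{2,\infty}((r_0^n,R^n))$ is then immediate: away from the single interface point $\wsub(\cdot,t)$ is smooth with locally bounded second derivative --- the only singularity of $\partial_{ss}\wsubmid=-\tfrac{\Asub m}{(m-1)^2}\sigma^{\frac{2-m}{m-1}}$ sits at $\sigma=0$, which is excluded by $\sigma\geq\delta>0$ --- and across the interface $\partial_s\wsub$ has bounded one-sided derivatives, hence is Lipschitz. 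Finally $\eta(\eps)\geq0$ for every $\eps$ and all parameter choices, because $\Asub\delta^{\frac{m}{m-1}}=\delta\cdot\Asub\delta^{\frac1{m-1}}=\delta\eps\kappa\cdot\tfrac{m-1}{m}<\delta\eps\kappa$, so $\eta>\eps\kappa(R^n-r_1^n)>0$.

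For the outer branch, $\partial_{ss}\wsubout=0$ reduces the operator to
\[
  \Peps\wsubout=\eps\kappa\big[\theta-\mu(R^n-s)+\eps\kappa(R^n-\theta t-s)\big];
\]
using $R^n-\theta t-s\leq R^n-s$, then $s\leq r_2^n$ together with $\mu-2\eps\kappa\geq\mu-2\eps_0\kappa>0$, the bracket is at most $\theta-(\mu-2\eps\kappa)(R^n-r_2^n)$, which by hypothesis \eqref{eq:wsub_theta_condition} is at most $-\eps\kappa(R^n-r_2^n)$; multiplying by $\eps\kappa=\partial_s\wsubout$ gives the claim for $s>\rho(t)-\delta$.

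The heart of the matter is the inner branch. Here I would factor $\partial_s\wsubmid=\Asub\tfrac{m}{m-1}\sigma^{\frac1{m-1}}$ out of $\Peps\wsubmid$ to obtain
\[
  \Peps\wsubmid=\partial_s\wsubmid\cdot\Big[\theta+\tfrac{n^2 s^{2-\frac2n}}{(m-1)\sigma}\big(\Asub\tfrac{m}{m-1}\sigma^{\frac1{m-1}}+\eps\big)^{m-1}+\eta+\Asub\sigma^{\frac m{m-1}}-\mu(R^n-s)\Big].
\]
The crucial observation is that the bracket is a small perturbation of $-\Lambda$, where $\Lambda\defs\mu(R^n-r_1^n)-\tfrac{n^2 r_1^{2n-2}m^{m-1}\Asub^{m-1}}{(m-1)^m}$, whose strict positivity is exactly equivalent to the hypothesis $\Asub<\Ccrit(r_1)$ (rearrange \eqref{eq:Ccrit_def}). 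Indeed, $\sigma\geq\delta$ yields $\eps\leq\tfrac1\kappa\Asub\tfrac{m}{m-1}\sigma^{\frac1{m-1}}$, so $\big(\Asub\tfrac{m}{m-1}\sigma^{\frac1{m-1}}+\eps\big)^{m-1}\leq(1+\tfrac1\kappa)^{m-1}\big(\Asub\tfrac{m}{m-1}\big)^{m-1}\sigma$, and hence (using also $s^{2-\frac2n}\leq r_1^{2n-2}$, valid since $s<r_1^n$ and $2-\tfrac2n\geq0$) the diffusion term is bounded by $\tfrac{n^2 r_1^{2n-2}m^{m-1}\Asub^{m-1}}{(m-1)^m}(1+\tfrac1\kappa)^{m-1}$; moreover $-\mu(R^n-s)\leq-\mu(R^n-r_1^n)$, $\Asub\sigma^{\frac m{m-1}}\leq\Asub(r_1^n-r_{\mathrm{min}}^n)^{\frac m{m-1}}$, $\eta\leq\eps_0\kappa(R^n-r_1^n+\delta(\eps_0))$ and $\theta\leq\theta_{\mathrm{max}}$.

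It then remains to fix the parameters, which I would do in this order: first choose $\kappa$ so large that $\tfrac{n^2 r_1^{2n-2}m^{m-1}\Asub^{m-1}}{(m-1)^m}\big[(1+\tfrac1\kappa)^{m-1}-1\big]<\tfrac{\Lambda}{5}$; then $r_{\mathrm{min}}\in(0,r_1)$ so close to $r_1$ that $\Asub(r_1^n-r_{\mathrm{min}}^n)^{\frac m{m-1}}<\tfrac{\Lambda}{5}$; then $\theta_{\mathrm{max}}\defs\tfrac{\Lambda}{5}$; and finally $\eps_0\in(0,1)$ so small that $2\kappa\eps_0<\mu$ and $\eps_0\kappa(R^n-r_1^n+\delta(\eps_0))<\tfrac{\Lambda}{5}$ (possible since $\delta(\eps_0)\to0$ as $\eps_0\sea0$ for fixed $\kappa$). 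With these choices the bracket above is $<-\tfrac{\Lambda}{5}$, while $\eps\kappa(R^n-r_2^n)\leq\eps_0\kappa(R^n-r_1^n)<\tfrac{\Lambda}{5}$, so the bracket is $\leq-\eps\kappa(R^n-r_2^n)$, and multiplying by $\partial_s\wsubmid>0$ gives $\Peps\wsubmid\leq-\eps\kappa(R^n-r_2^n)\partial_s\wsubmid\leq0$ on $\{s<\rho(t)-\delta\}$, as required. I expect the main obstacle to be the bookkeeping on the inner branch --- carrying out the factorization of $\Peps\wsubmid$ correctly and recognizing that the surviving leading coefficient is precisely the one encoded in $\Ccrit(r_1)$; once $\Lambda>0$ is secured, every remaining contribution is genuinely of lower order and is absorbed by choosing the perturbation parameters small.
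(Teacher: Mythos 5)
Your proposal follows essentially the same route as the paper: factor $\partial_s \wsub$ out of $\Peps \wsub$ on each branch, identify the leading negative term (encoded by $\Asub < \Ccrit(r_1)$, i.e., your $\Lambda > 0$), and choose $\kappa, r_{\mathrm{min}}, \theta_{\mathrm{max}}, \eps_0$ so that the remaining contributions are small; the paper's split via the parameter $\lambda$ in \eqref{eq:wsub_C_condition} and your $\Lambda/5$ bookkeeping are interchangeable. Your argument for the $\eta \ge 0$, the $C^1$ gluing, the $W^{2,\infty}_{\loc}$ regularity, and the inner-branch estimate is sound.

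There is, however, a genuine gap in the outer-branch computation. After using $R^n - \theta t - s \le R^n - s$ and $s \le r_2^n$, the correct upper bound for the bracket is $\theta - (\mu - \eps\kappa)(R^n - r_2^n)$, not $\theta - (\mu - 2\eps\kappa)(R^n - r_2^n)$ as you wrote. The latter is still a valid (weaker) upper bound, but when you then insert the hypothesis $\theta \le (\mu - 2\eps_0\kappa)(R^n - r_2^n)$, you obtain only
\begin{align*}
  \theta - (\mu - 2\eps\kappa)(R^n - r_2^n) \le -2(\eps_0 - \eps)\kappa(R^n - r_2^n),
\end{align*}
which is $\le -\eps\kappa(R^n - r_2^n)$ if and only if $\eps \le \tfrac{2}{3}\eps_0$; the conclusion therefore fails for $\eps$ close to $\eps_0$. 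The repair is simply to keep the sharper bound: from $\theta - (\mu - \eps\kappa)(R^n - r_2^n)$ together with $\eps < \eps_0$ one gets $\theta - (\mu - \eps\kappa)(R^n - r_2^n) \le (\eps\kappa - 2\eps_0\kappa)(R^n - r_2^n) < -\eps_0\kappa(R^n - r_2^n) \le -\eps\kappa(R^n - r_2^n)$, which is exactly what the paper does. With that one-line fix, your proof is complete.
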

\begin{proof}
  Due to $\Asub < \Ccrit(r_1)$, we may fix large $\kappa > 0$ as well as small $\lambda \in (0,1)$ such that 
  \begin{equation}\label{eq:wsub_C_condition}
    \Asub \leq \frac{(m-1)\kappa}{m(\kappa + 1)}\left((1-\lambda)\frac{\mu(R^n - r_1^n)(m-1)}{r_1^{2n - 2} n^2}\right)^\frac{1}{m-1}.
  \end{equation}
  We then fix small $\eps_0 \in (0,1)$ and $\theta_\mathrm{max} > 0$ as well as $r_\mathrm{min} \in (0, r_1)$ close to $r_1$ such that
  \begin{equation}\label{eq:wsub_eps_condition_1}
    2\eps_0\kappa < \lambda \mu < \mu 
  \end{equation}
  such that 
  \begin{equation}\label{eq:wsub_r_condition_with_theta}
    r_1^n - r_0^n \leq \left(\left[\frac{\lambda\mu}{2}(R^n - r_1^n) - \theta_\mathrm{max} - \eps_0\kappa (R^n - r_1^n) \right]\frac{1}{\Asub} \right)^\frac{m-1}{m},
  \end{equation}
  for all $r_0 \in [r_\mathrm{min}, r_1)$, 
  and such that the quantities $\eta$ and $\delta$ defined in \eqref{eq:wsub_def_delta_eta} fulfil
  \begin{equation}\label{eq:wsub_eps_condition_2}
    \eta \leq \frac{\lambda\mu}{2}(R^n - r_1^n)
    \quad \text{and} \quad
    \eta = \eps\left(-\Asub \left(\frac{\kappa(m-1)}{\Asub m}\right)^{m} \eps^{m-1} + \kappa (R^n - r_1^n + \delta) \right) \geq 0
  \end{equation}
  for all $\eps \in (0,\eps_0)$, which is possible as $\delta$ and thus $\eta$ converge to 0 as $\eps \searrow 0$.
  Henceforth, we fix $\eps \in (0, \eps_0)$, $r_0 \in [r_\mathrm{min}, r_1)$, $r_2 \in (r_1, R]$ and $\theta \in [0, \theta_\mathrm{max}]$.

  By straightforward calculations, we immediately see that 
  \begin{align}\label{eq:wsub_wmid_s}
    \partial_s \wsubmid(s,t)
      &= \frac{\Asub m (\rfn(t) - s)^{\frac{1}{m-1}}}{m-1}
      > 0, \\
    \partial_{ss} \wsubmid(s,t)
      &= -\frac{\Asub m (\rfn(t) - s)^{\frac{1}{m-1}-1}}{(m-1)^2}
      < 0, \label{eq:wsub_wmid_ss}\\
    \partial_t \wsubmid(s,t)
      &= -\frac{\rfn'(t)\Asub m(\rfn(t) - s)^{\frac{1}{m-1}}}{m-1}
      = \theta \partial_s \wsubmid(s,t) \label{eq:wsub_wmid_t}
  \end{align} 
  for all $s\in[r_0^n, \rfn(t) - \delta]$ and $t\in[0,\infty)$ as well as
  \begin{align*}
    \partial_s \wsubout(s,t) &= \eps\kappa, \\
    \partial_{ss} \wsubout(s,t) &= 0,\\ 
    \partial_t \wsubout(s,t) &= \eps\kappa\theta
  \end{align*}
  for all $s\in[\rfn(t) - \delta, R^n]$ and $t\in[0,\infty)$.
  Combined with \eqref{eq:wsub_def_delta_eta}, this yields
  \begin{equation*}
    \wsubmid(\rfn(t) - \delta, t) = \mu R^n - \eta - \Asub\delta^{\frac{m}{m-1}} = \mu R^n - \eps\kappa(R^n - r_1^n  + \delta) = \wsubout(\rfn(t) - \delta, t)
  \end{equation*}
  and 
  \begin{equation*}
    \partial_s \wsubmid(\rfn(t) - \delta, t) = \frac{\Asub m}{m-1} \delta^{\frac{1}{m-1}} = \eps\kappa = \partial_s \wsubout(\rfn(t) - \delta, t)
  \end{equation*}
  for all $t\in [0,\infty)$.
  Moreover, $\partial_{ss}\wsubmid$ is continuous on the compact set $[r_0^n, \rfn(t) - \delta]$ for all $t\in[0,\infty)$, 
  so that all our desired regularity properties for $\wsub$ are evidently fulfilled.
 
  We will now start the argument proper by plugging $\wsubmid$ into the functional $\Peps$ to gain 
  \[
    \Peps \wsubmid = \partial_s \wsubmid \left[ \theta - n^2 s^{2-\frac2n}(\partial_s \wsubmid + \eps)^{m-1}\frac{\partial_{ss} \wsubmid}{\partial_s \wsubmid} - \wsubmid + \mu s \right]
  \]
  for all $s\in[r_0^n, \rfn(t) - \delta)$ and $t\in[0,\infty)$.

  We first estimate the terms originating from taxis to yield 
  \begin{align*}
    - \wsubmid + \mu s &= \mu (s-R^n) + \eta + \Asub(\rfn(t) - s)^{\frac{m}{m-1}} \\
    &\leq -\mu (R^n - r_1^n) + \eta + \Asub(r_1^n - r_0^n)^{\frac{m}{m-1}} 
  \end{align*}
  for all $s\in[r_0^n, \rfn(t) - \delta)$ and $t\in[0,\infty)$.
  Applying (\ref{eq:wsub_r_condition_with_theta}) as well as (\ref{eq:wsub_eps_condition_2}) to this then results in 
  \begin{equation}\label{eq:wsub_estimate_1}
    - \wsubmid + \mu s \leq -\theta_\mathrm{max} - \eps_0 \kappa (R^n - r_1^n) -(1-\lambda)\mu(R^n - r_1^n)
  \end{equation}
  for all $s\in[r_0^n, \rfn(t) - \delta)$ and $t\in[0,\infty)$.
    
  Regarding the diffusive term, we see that \eqref{eq:wsub_wmid_s}, \eqref{eq:wsub_wmid_ss}, the fact that $\partial_s \wsubmid \ge \eps\kappa$ and \eqref{eq:wsub_C_condition} entail that
  \begin{align*}
    - n^2 s^{2-\frac2n}(\partial_s \wsubmid + \eps)^{m-1}\frac{\partial_{ss} \wsubmid}{\partial_s \wsubmid}
    &\le \frac{n^2 s^{2-\frac2n}}{m-1} \left(\frac{\kappa + 1}{\kappa}\partial_s \wsubmid\right)^{m-1} (\rfn(t) - s)^{-1} \\
    &\leq \frac{n^2 r_1^{2n - 2}}{m-1} \left(\frac{m(\kappa + 1)}{(m-1)\kappa}\Asub\right)^{m-1}
    \leq (1-\lambda)\mu(R^n - r_1^n) 
  \end{align*}
  for all $s\in[r_0^n, \rfn(t) - \delta)$ and $t\in[0,\infty)$.
  In combination with (\ref{eq:wsub_estimate_1}), this yields
  \begin{align*}
    \Peps \wsubmid &\leq \partial_s \wsubmid \left[ \theta + (1-\lambda)\mu(R^n - r_1^n) - \theta_\mathrm{max} -\eps_0\kappa(R^n - r_1^n) - (1-\lambda)\mu(R^n - r_1^n) \right] \\
    &= \partial_s \wsubmid \left[ \theta - \theta_\mathrm{max}  - \eps_0\kappa(R^n - r_1^n) \right]
    \leq -\eps\kappa (R^n - r_2^n) \partial_s \wsubmid
  \end{align*}
  for all $s\in[r_0^n, \rfn(t) - \delta)$ and $t\in[0,\infty)$.

  We now turn our attention to the linear extension $\wsubout$. As its second spatial derivative is always zero, plugging it into the functional $\Peps$ allows us to estimate as follows for all $s\in(\rfn(t) - \delta, r_2^n]$ and $t\in[0,\infty)$ by using (\ref{eq:wsub_eps_condition_1}) as well as (\ref{eq:wsub_theta_condition}):
  \begin{align*}
    \Peps \wsubout &= \partial_s \wsubout \left[ \theta - \wsubout + \mu s  \right] \\
    &=\partial_s \wsubout \left[ \theta - \mu(R^n - s) + \eps \kappa(R^n - \theta t - s) \right] \\
    &=\partial_s \wsubout \left[ \theta - (\mu-\eps \kappa)(R^n - s) - \eps \kappa \theta t \right] \\
    &\leq\partial_s \wsubout \left[ \theta - (\mu-\eps_0\kappa)(R^n - r_2^n) \right] \\
    &\leq -\eps_0\kappa\partial_s \wsubout(R^n - r_2^n) \leq -\eps\kappa\partial_s \wsubout(R^n - r_2^n)
  \end{align*}
  This completes the proof.
\end{proof}

To now prove the central result of this subsection, we will use the above family of functions for two consecutive comparison arguments on the approximate level. We begin by comparing with a stationary function $\wsub$ with $\theta = 0$ on the full interval $(r_0^n, R^n)$ to make use of the uniform boundary value of our approximate solutions at $s = R^n$. This allows us to show that the support of $w$ at the very least does not expand. We then use this result to establish a similar boundary condition at some point $r_2^n < R^n$ in the interior of the domain to facilitate a second comparison argument on $(r_0^n, r_2^n)$ with a nonstationary function $\wsub$ with $\theta > 0$, which will be key to showing that the support of $w$ in fact actually shrinks. The reason we cannot immediately compare with a nonstationary function in our construction is due to the condition (\ref{eq:wsub_theta_condition}) on $\theta$, which makes this two-step approach necessary.

\begin{lemma}\label{lm:wsub_comparison}
  Let $r_1 \in (0,R)$ and $C \in (0, \Ccrit(r_1))$ with $\Ccrit(r_1)$ as in (\ref{eq:Ccrit_def}). Then there exist $r_\mathrm{min} \in (0, r_1)$, $\theta > 0$ and $\lambda > 1$ such that the following holds for all $r_0 \in [r_\mathrm{min}, r_1)$:

  If
  \begin{equation}\label{eq:wsub_w_0_condition}
    w_0(s) \geq \mu R^n - C (r_1^n - s)^{\frac{m}{m-1}}
  \end{equation}
  for all $s \in [r_0^n, r_1^n]$,
  then there exists $T \in (0,\texist)$ such that
  \begin{equation}\label{eq:wsub_w_comp_result}
    w(s,t) \geq \begin{cases}
      \mu R^n- \lambda C (r_1^n - \theta t - s)^{\frac{m}{m-1}}& \text{if } s < r_1^n - \theta t \\
      \mu R^n &\text{if } s \geq r_1^n - \theta t
    \end{cases}
  \end{equation}
  for all $s \in[r_0^n, r_1^n]$ and a.e.\ $t \in [0, T]$.
\end{lemma}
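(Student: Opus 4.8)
We plan to deduce \eqref{eq:wsub_w_comp_result} from the comparison principle of Lemma~\ref{lm:cmp_princ}, which we apply \emph{twice} on the approximate level using the subsolutions built in Lemma~\ref{lm:wsub_construction}, before passing to the limit $\eps \searrow 0$. To set the stage, we pick $\lambda > 1$ so close to $1$ that $\Asub \defs \lambda C$ still lies in $(0, \Ccrit(r_1))$, apply Lemma~\ref{lm:wsub_construction} with this $\Asub$ to obtain $r_\mathrm{min}$, $\theta_\mathrm{max}$, $\kappa$, $\eps_0$ and the functions $\delta(\eps)$, $\eta(\eps) \ge 0$, fix some $r_2 \in (r_1, R)$, and put $\theta \defs \min\bigl\{\theta_\mathrm{max},\, (\mu - 2\eps_0\kappa)(R^n - r_2^n)\bigr\}$, which is positive since $\mu > 2\eps_0\kappa$ by \eqref{eq:wsub_eps_condition_1} and makes \eqref{eq:wsub_theta_condition} hold for this $r_2$. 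None of $r_\mathrm{min}$, $\lambda$, $\theta$ depends on $r_0$, as required. Two facts will be used repeatedly: evaluating \eqref{eq:wsub_w_0_condition} at $s = r_1^n$ gives $w_0(r_1^n) \ge \mu R^n$, so $w_0 \equiv \mu R^n$ on $[r_1^n, R^n]$ by $w_0 \le \mu R^n$ and the monotonicity of $w_0$; and since $\we(r^n, t)$ is a fixed positive multiple of $\int_{B_r(0)} \ue(\cdot, t)$, Lemma~\ref{lm:uniform_convergence} provides, for each $\delta' > 0$, some $t_0 \in (0, \texist)$ with $\we(r_0^n, t) \ge w_0(r_0^n) - \delta'$ for all $t \in (0, t_0)$ and $\eps \in (0,1)$.

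\emph{First comparison (stationary, on the full interval).} We apply Lemma~\ref{lm:wsub_construction} with $\theta = 0$ and $r_2 = R$; call the resulting function $\ul w^{(0)}$. It is nondecreasing in $s$, has the regularity needed in Lemma~\ref{lm:cmp_princ}, and satisfies $\Peps \ul w^{(0)} \le 0$ a.e.\ on $(r_0^n, R^n)$, and we compare it against the classical solution $\we$, for which $\Peps \we = 0$, $\partial_s \we = \ue(s^{1/n}, \cdot) > 0$ and the required regularity hold. The parabolic-boundary ordering is clear at $s = R^n$ (both sides equal $\mu R^n$) and follows at $s = r_0^n$ from $\eta \ge 0$, $\Asub > C$, \eqref{eq:wsub_w_0_condition} and the lower bound above, provided $\delta'$ is chosen small compared to $(\lambda - 1) C(r_1^n - r_0^n)^{\frac{m}{m-1}}$; the initial ordering $\ul w^{(0)}(\cdot, 0) \le w_0$ follows from the same ingredients once one notes that on the linear branch near $r_1^n$ it reduces to $\eps\kappa(R^n - r_1^n) \ge C\delta^{\frac{m}{m-1}}$, which holds for small $\eps$ because $\delta^{\frac{m}{m-1}}$ is of order $\eps^m$. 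Lemma~\ref{lm:cmp_princ} then gives $\we \ge \ul w^{(0)}$ on $[r_0^n, R^n] \times [0, t_0)$, and in particular, as $r_2^n > r_1^n - \delta$,
\[
  \we(r_2^n, t) \ge \mu R^n - \eps\kappa(R^n - r_2^n) \qquad \text{for } t \in [0, t_0).
\]
(Sending $\eps \searrow 0$ here already shows that the support does not expand, but the actual shrinking in \eqref{eq:wsub_w_comp_result} needs a second argument with $\theta > 0$.)

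\emph{Second comparison (non-stationary, on a shorter interval).} Now let $\ul w^{(\theta)}$ be the function from Lemma~\ref{lm:wsub_construction} with our fixed $\theta > 0$, and compare $\we$ on $(r_0^n, r_2^n)$ with the \emph{downward-shifted} function $\ul w^{(\theta)} - c$, where $c \defs \eps\kappa(R^n - r_2^n)$. The \emph{quantitative} subsolution estimate $\Peps \ul w^{(\theta)} \le -\eps\kappa(R^n - r_2^n)\,\partial_s \ul w^{(\theta)}$ of Lemma~\ref{lm:wsub_construction} yields $\Peps(\ul w^{(\theta)} - c) = \Peps \ul w^{(\theta)} + c\,\partial_s \ul w^{(\theta)} \le 0$, so $\ul w^{(\theta)} - c$ is still an admissible subsolution. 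Its initial data equal $\ul w^{(0)} - c \le \ul w^{(0)}$, so the initial ordering carries over from the first step; the ordering at $s = r_0^n$ follows exactly as before, using continuity at $t = 0$ of $t \mapsto (r_1^n - \theta t - r_0^n)^{\frac{m}{m-1}}$ to absorb the mild $t$-dependence of the middle branch; and at $s = r_2^n$,
\[
  (\ul w^{(\theta)} - c)(r_2^n, t) = \mu R^n - \eps\kappa(R^n - r_2^n) + \eps\kappa\theta t - c \le \mu R^n - \eps\kappa(R^n - r_2^n) \le \we(r_2^n, t)
\]
provided $\eps\kappa\theta t \le c$, i.e.\ $t \le (R^n - r_2^n)/\theta$. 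So, for a sufficiently small $T \in (0, \texist)$, Lemma~\ref{lm:cmp_princ} gives $\we \ge \ul w^{(\theta)} - c$ on $[r_0^n, r_2^n] \times [0, T)$ for all small $\eps$.

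\emph{Passage to the limit.} Sending $\eps = \eps_j \searrow 0$ and using \eqref{eq:w_pw_conv}, the quantities $\delta(\eps)$, $\eta(\eps)$, $c$ and the slope $\eps\kappa$ all tend to $0$, so on $[r_0^n, r_1^n]$ the estimate $\we \ge \ul w^{(\theta)} - c$ passes to $w(s, t) \ge \mu R^n - \lambda C(r_1^n - \theta t - s)^{\frac{m}{m-1}}$ for $s < r_1^n - \theta t$ and to $w(s, t) \ge \mu R^n$ — hence $w(s, t) = \mu R^n$ by \eqref{eq:w_upper_bound} — for $s \ge r_1^n - \theta t$, for a.e.\ $t \in [0, T]$; this is \eqref{eq:wsub_w_comp_result}. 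The step I expect to be the main obstacle is the lateral ordering at $s = r_2^n$ in the second comparison: the outer branch of the non-stationary comparison function lies \emph{above} the stationary one from the first step by $\eps\kappa\theta t$, so that step's bound does not suffice directly. This is precisely what necessitates the downward shift by $c$ — admissible thanks to the quantitative subsolution estimate, and harmless after $\eps \searrow 0$ — as well as the constraint $T \le (R^n - r_2^n)/\theta$; the remaining work, namely tracking the finitely many smallness requirements on $\eps$, $\delta'$ and $T$ and verifying that $r_\mathrm{min}$, $\lambda$, $\theta$ are genuinely independent of $r_0$, is routine.
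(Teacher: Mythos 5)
Your proposal is correct and takes essentially the same approach as the paper's proof: a first comparison on $[r_0^n,R^n]$ with the stationary subsolution ($\theta=0$, using that \eqref{eq:wsub_theta_condition} is then trivially satisfied for $r_2=R$) to obtain the intermediate lower bound $\we(r_2^n,t)\ge\mu R^n-\eps\kappa(R^n-r_2^n)$, followed by a second comparison on $[r_0^n,r_2^n]$ with the non-stationary subsolution shifted down by $c=\eps\kappa(R^n-r_2^n)$ (which remains a subsolution precisely because of the quantitative bound $\Peps\wsub\le-\eps\kappa(R^n-r_2^n)\,\partial_s\wsub$), and then passage to the limit via \eqref{eq:w_pw_conv}. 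The only small deviation is in verifying the initial ordering on the linear branch $[r_1^n-\delta,\,r_1^n]$: you impose a further smallness of $\eps$ so that $\eps\kappa(R^n-r_1^n)\ge C\delta^{m/(m-1)}=O(\eps^m)$, whereas the paper shows $\partial_s\bigl(\wsub_{\eps,\mathrm{stat}}(\cdot,0)-z_0\bigr)=\eps\kappa(1-1/\lambda)\ge0$ on that branch and concludes from the ordering at the right endpoint without any extra constraint on $\eps$; both arguments are valid.
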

\begin{proof}
  We begin by fixing $\lambda > 1$ such that 
  \[
    \Asub \defs \lambda C < \Ccrit(r_1).
  \]
  We now fix $r_\mathrm{min} \in (0,r_1)$, $\theta_\mathrm{max} > 0$, $\kappa > 0$ and $\eps_0 \in (0,1)$ according to Lemma~\ref{lm:wsub_construction} and then $\theta \in (0,\theta_\mathrm{max})$ such that (\ref{eq:wsub_theta_condition}) holds for 
  \begin{equation}\label{eq:wsub_def_r2}
    r_2 \defs \left(\frac{r_1^n + R^n}{2}\right)^\frac1n \in (r_1, R).
  \end{equation}
  As (\ref{eq:wsub_w_0_condition}) ensures that 
  \[  
    w_0(r_0^n) \geq \mu R^n - C(r_1^n - r_0^n)^{\frac{m}{m-1}} > \mu R^n - \frac{\lambda + 1}{2}C(r_1^n - r_0^n)^{\frac{m}{m-1}},
  \]
  due to $\frac{\lambda + 1}{2} > 1$, there exist $T \in (0, T_0)$ and $\eps_1 \in (0,\eps_0]$ such that
  \begin{equation}\label{eq:wsub_left_boundary} 
    \we(r_0^n, t) \geq \mu R^n - \frac{\lambda + 1}{2}C(r_1^n - r_0^n)^{\frac{m}{m-1}}
  \end{equation}
  for all $t\in[0,T]$ and $\eps \in (0, \eps_1)$ by Lemma~\ref{lm:uniform_convergence} as well as
  \begin{equation}\label{eq:wsub_delta_small}
    \delta = \left(\frac{\eps\kappa(m-1)}{\Asub m}\right)^{m-1} < \frac{r_1^n - r_0^n}{2}
  \end{equation}
  for all $\eps \in (0, \eps_1)$ and
  \begin{equation}\label{eq:wsub_T_condition}
    T < \max\left\{\frac{r_1^n - r_0^n}{\theta_\mathrm{max}}\left(1 - \left(\frac{2\lambda}{\lambda + 1} \right)^{-\frac{m-1}{m}}\right),\, \frac{r_1^n - r_0^n}{2\theta_\mathrm{max}},\, \frac{R^n - r_1^n}{2\theta_\mathrm{max}}\right\}.
  \end{equation}

  For our first comparison function, let now $\wsub_{\eps, \mathrm{stat}}$ be as in (\ref{eq:wsub_definition}) for all $\eps \in (0,\eps_1)$ with the parameters as fixed above but $\theta$ replaced by $0$. Then we gain
  \[ 
    \wsub_{\eps,\mathrm{stat}}(r_0^n, t) = \mu R^n - \eta - \lambda C(r_1^n - r_0^n)^{\frac{m}{m-1}} \leq \mu R^n - \frac{\lambda + 1}{2}C(r_1^n - r_0^n)^{\frac{m}{m-1}} \leq \we(r_0^n, t) 
  \]
  for all $t \in [0,T]$ and $\eps \in (0, \eps_1)$ due to (\ref{eq:wsub_left_boundary}), $\eta \geq 0$, $\frac{\lambda + 1}{2} < \lambda$ and (\ref{eq:wsub_delta_small}) ensuring that $r_0^n \le r_1^n - \delta = \rho(t) - \delta$. We further directly gain that
  \[
    \wsub_{\eps, \mathrm{stat}}(R^n, t) = \mu R^n = \we(R^n, t)
  \]
  for all $t \in [0,T]$ and $\eps \in (0,\eps_1)$.
  
  Again due to $\eta \geq 0$, $\lambda > 1$ as well as the fact that $w_0(s) \geq \mu R^n - C (r_1^n - s)_+^{\frac{m}{m-1}} \sfed z_0(s)$ for $s \in [r_0^n, R^n]$ by \eqref{eq:wsub_w_0_condition},
  we see that $w_0(s) \geq \wsub_{\eps, \mathrm{stat}}(s,0)$ for all $s\in[r_0 ^n, r_1^n - \delta]\cup[r_1^n, R^n]$.
  Moreover, since $w_0 \ge z_0$ and $z_0(r_1^n) \ge \wsub_{\eps, \mathrm{stat}}(r_1^n,0)$,
  the ordering $w_0(s) \geq \wsub_{\eps, \mathrm{stat}}(s,0)$ in $[r_1^n - \delta, r_1^n]$ and hence on the full interval $[r_0^n, R^n]$
  follows if $\wsub_{\eps, \mathrm{stat}}(\cdot ,0) - z_0$ is increasing in the former interval.
  Indeed,
  \begin{align*}
       \partial_s (\wsub_{\eps, \mathrm{stat}}(s,0) - z_0(s))
    &= \partial_s \left[ -\eps\kappa(R^n - s) + C (r_1^n - s)^\frac{m}{m-1} \right]
    = \eps\kappa - \tfrac{Cm}{m-1} (r_1^n - s)^{\frac{1}{m-1}} \\
    &\geq \eps\kappa - \tfrac{Cm}{m-1} \delta^{\frac{1}{m-1}}
    = \eps \kappa ( 1 - \tfrac{1}{\lambda}) 
  \geq 0
  \end{align*}
  for $s \in [r_1^n - \delta, r_1^n]$ by definition of $\delta$ in (\ref{eq:wsub_delta_small}).
  
  Notably when $\theta = 0$, the condition (\ref{eq:wsub_theta_condition}) is always trivially fulfilled. Thus Lemma~\ref{lm:wsub_construction} ensures that we can apply the comparison result in Lemma~\ref{lm:cmp_princ} to $\we$ and $\wsub_{\eps, \mathrm{stat}}$ and thus gain $\we \geq \wsub_{\eps, \mathrm{stat}}$ on $[r_0^n, R^n]\times[0,T]$. 
  This then directly yields 
  \begin{equation}\label{eq:wsub_right_boundary}
    \we(r_2^n, t) \geq \wsub_{\eps, \mathrm{stat}}(r_2^n, t) = \mu R^n - \eps\kappa \left(\frac{R^n-r_1^n}{2}\right)
  \end{equation}
  for all $t\in [0,T]$ and $\eps \in (0,\eps_1)$.

  For our second comparison argument, we now let
  \[
    \wsub_{\eps, \mathrm{shrink}} \defs \wsub_\eps - \eps\kappa(R^n - r_2^n)
  \] 
  for all $\eps \in (0,\eps_1)$ with $\wsub_\eps$ as in (\ref{eq:wsub_definition}) with the parameters exactly as chosen at the beginning of this proof (including our choice of $\theta$). We now first observe that 
  \begin{align*}
    \wsub_{\eps, \mathrm{shrink}}(r_0^n, t) &= \mu R^n - \eta - \eps\kappa(R^n - r_2^n) - \lambda C(r_1^n - r_0^n - \theta t)^{\frac{m}{m-1}} \\
    &\leq \mu R^n - \frac{\lambda + 1}{2}C(r_1^n - r_0^n)^{\frac{m}{m-1}} \\
    &\leq \we(r_0^n, t)
  \end{align*}
  for all $t\in[0,T]$ and $\eps \in (0,\eps_1)$ due to (\ref{eq:wsub_left_boundary}), (\ref{eq:wsub_T_condition}) and $\eta \geq 0$,
  where we have made use of the fact that the conditions on $\delta$ and $T$ in (\ref{eq:wsub_delta_small}) and (\ref{eq:wsub_T_condition}) entail that $r_0^n \le r_1^n - \theta T - \delta \le \rho(t) - \delta$.   
  Further,
  \begin{align*}
    \wsub_{\eps, \mathrm{shrink}}(r_2^n, t) 
    &= \mu R^n - \eps\kappa(R^n - r_2^n) - \eps\kappa\left(R^n - r_2^n - \theta t\right) \\
    &= \mu R^n - \eps\kappa\left(R^n - r_1^n - \theta t \right) \\
    &\leq \mu R^n - \eps\kappa\left(\frac{R^n - r_1^n}{2}  \right) \leq \we(r_2^n, t)
  \end{align*}
  for all $t \in [0,T]$ and $\eps \in (0,\eps_1)$ due to \eqref{eq:wsub_def_r2}, (\ref{eq:wsub_right_boundary}) and again (\ref{eq:wsub_T_condition}). Plugging this second comparison function into $\Peps$, we then see that 
  \[ 
    \Peps \wsub_{\eps, \mathrm{shrink}} = P_\eps \wsub_\eps + \eps\kappa(R^n - r_2^n)\partial_s \wsub_\eps \leq 0
  \]
  a.e.\ on $[r_0^n, r_2^n]\times[0,T]$ due to Lemma~\ref{lm:wsub_construction} and our previous parameter choices. As initial data ordering follows by the same argument as before, this again allows us to apply Lemma~\ref{lm:cmp_princ} to gain that 
  \[
    \we \geq \wsub_{\eps, \mathrm{shrink}}
  \]
  on $[r_0^n, r_1^n]\times[0,T]$. As $\wsub_{\eps, \mathrm{shrink}}$ converges pointwise to
  \[ 
    (s,t) \mapsto  \begin{cases}
      \mu R^n- \lambda C (r_1^n - \theta t - s)^{\frac{m}{m-1}}& \text{ if } s < r_1^n - \theta t \\
      \mu R^n &\text{ if } s \geq r_1^n - \theta t
    \end{cases}
  \]
  as $\eps \searrow 0$, we then gain our desired result due to the pointwise convergence property in (\ref{eq:w_pw_conv}).
\end{proof}

\subsection{Case of support expansion}
We now treat the support expansion case by showing that $w$ lies above a function of the type seen in (\ref{eq:comp_function_proto}) with a positive coefficient for $\theta$ under an appropriate assumption at $t = 0$. 

In this case, we again move the extension point in our approximate comparison functions slightly to the left of $r_1^n + \theta t$ when compared to (\ref{eq:comp_function_proto}) but also modify said functions in such a fashion as to still allow for a constant extension. This allows us to similarly work around the potential singularity of the second derivative at $r_1^n + \theta t$ as before while ensuring that the extension part is still a supersolution.

\begin{lemma}\label{lm:wsup_construction}
  Let $r_1 \in (0, R)$ and $\Asup > \Ccrit(r_1)$ with $\Ccrit(r_1)$ as in (\ref{eq:Ccrit_def}).
  Then there exists $r_\mathrm{min}\in(0,r_1)$, $\theta > 0$ as well as $\eps_0 \in (0,1)$ such that the following holds for all $r_0 \in [r_\mathrm{min}, r_1)$ and $\eps \in (0, \eps_0)$:

  Let
  \begin{equation}\label{eq:wsup_definition}
    \wsup(s,t) \defs \begin{cases}
      \mu R^n - \Asup(\rfn(t) - s)^{\frac{m}{m-1}} + \eps (\rfn(t) - s) \sfed \wsupmid(s,t) & \text{ if }  s < \rfn(t) - \delta  \\
      \mu R^n + \frac{\delta \eps}{m} \sfed \wsupout(s,t) & \text{ if } s \geq \rfn(t) - \delta 
    \end{cases}
  \end{equation}
  for all $(s,t) \in [r_0^n, R^n]\times[0,\Tsup]$ with $\delta \defs (\frac{\eps(m-1)}{\Asup m})^{m-1}$, $\rho(t) \defs r_1^n + \theta t$ and $\Tsup \defs \frac{R^n - r_1^n}{\theta}$.

  Then $\wsup \in C^1([r_0^n, R^n]\times[0,\Tsup])$ with $\partial_s \wsup \geq 0$ and $\wsup(\cdot,t) \in W_\loc^{2,\infty}((r_0^n, R^n))$ for all $t \in [0, \Tsup]$. Further,
  \begin{equation}\label{eq:wsup_conclusion}
    \Peps \wsup(s,t) \geq 0
  \end{equation}
  for all $s \in [r_0^n, R^n]\setminus\{\rfn(t) - \delta \}$ and $t\in[0,\Tsup]$.
\end{lemma}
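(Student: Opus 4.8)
The plan is to mirror the construction and verification carried out for $\wsub$ in Lemma~\ref{lm:wsub_construction}, the simplification here being that the outer piece $\wsupout$ is \emph{constant} and hence automatically a supersolution, $\Peps\wsupout = 0$; in particular no counterpart of the constraint \eqref{eq:wsub_theta_condition} linking $\theta$ and $R^n - r_2^n$ is needed. I would first record that the extra summand $+\eps(\rho(t)-s)$ in $\wsupmid$, together with the choice $\delta = (\tfrac{\eps(m-1)}{\Asup m})^{m-1}$, is exactly what makes the two pieces join in a $C^1$ fashion: from
\[
  \partial_s\wsupmid(s,t) = \tfrac{\Asup m}{m-1}(\rho(t)-s)^{\frac{1}{m-1}} - \eps
  \qquad\text{and}\qquad
  \partial_{ss}\wsupmid(s,t) = -\tfrac{\Asup m}{(m-1)^2}(\rho(t)-s)^{\frac{1}{m-1}-1}
\]
one reads off that $\partial_s\wsupmid$ vanishes precisely where $\rho(t)-s = \delta$, matching $\partial_s\wsupout \equiv 0$, and, using $\delta^{\frac{1}{m-1}} = \tfrac{\eps(m-1)}{\Asup m}$ once more, that $\wsupmid(\rho(t)-\delta,t) = \mu R^n - \Asup\delta^{\frac{m}{m-1}} + \eps\delta = \mu R^n + \tfrac{\delta\eps}{m} = \wsupout(\rho(t)-\delta,t)$. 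Since $\partial_t\wsupmid = -\theta\,\partial_s\wsupmid$ also vanishes at the junction, $\wsup \in C^1$; that $\partial_s\wsup \ge 0$ follows because $\rho(t)-s \ge \delta$ forces $\partial_s\wsupmid \ge 0$ on the inner region while $\partial_s\wsupout \equiv 0$; and $\wsup(\cdot,t) \in W_\loc^{2,\infty}((r_0^n,R^n))$ because $\partial_s\wsup(\cdot,t)$ is continuous and Lipschitz away from the interior junction, $\partial_{ss}\wsupmid$ staying bounded as $\rho(t)-s$ ranges over $[\delta,R^n]$.

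Next I would fix the parameters. By the definition \eqref{eq:Ccrit_def} of $\Ccrit$, the hypothesis $\Asup > \Ccrit(r_1)$ is equivalent to
\[
  2\beta \defs n^2 r_1^{2n-2}\,\tfrac{(\Asup m)^{m-1}}{(m-1)^m} - \mu(R^n - r_1^n) > 0,
\]
and since $r_0 \mapsto n^2 r_0^{2n-2}\,\tfrac{(\Asup m)^{m-1}}{(m-1)^m} - \mu(R^n - r_0^n)$ is continuous and nondecreasing with value $2\beta$ at $r_0 = r_1$, I can choose $r_\mathrm{min} \in (0,r_1)$ so that this quantity stays $\ge \beta$ on $[r_\mathrm{min},r_1)$, and then $\theta > 0$ and $\eps_0 \in (0,1)$ so small that $\theta + \eps_0(R^n - r_\mathrm{min}^n) < \beta$ (shrinking $\eps_0$ further, if desired, so that $\delta(\eps) < r_1^n - r_\mathrm{min}^n$ for $\eps < \eps_0$, which keeps the junction interior; if the inner region degenerates to the empty set the claim is trivial, as $\wsup$ is then the constant $\wsupout$). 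With $\Tsup = \tfrac{R^n-r_1^n}{\theta}$ one has $\rho(t) \le R^n$, and hence $\rho(t)-s \le R^n-r_0^n$, for all $s \in [r_0^n,R^n]$ and $t \in [0,\Tsup]$.

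It then remains to verify \eqref{eq:wsup_conclusion}. On $\{s > \rho(t)-\delta\}$ this is immediate, since all derivatives of the constant $\wsupout$ vanish. On $\{s < \rho(t)-\delta\}$, where $\partial_s\wsupmid > 0$, I would factor, in the spirit of \eqref{eq:intro:w_cmp_calc1},
\[
  \Peps\wsupmid = \partial_s\wsupmid\left[\, -\theta - n^2 s^{2-\frac2n}(\partial_s\wsupmid+\eps)^{m-1}\tfrac{\partial_{ss}\wsupmid}{\partial_s\wsupmid} - \wsupmid + \mu s \,\right].
\]
Since $(\partial_s\wsupmid+\eps)^{m-1} = (\tfrac{\Asup m}{m-1})^{m-1}(\rho(t)-s)$, a short computation gives $-(\partial_s\wsupmid+\eps)^{m-1}\partial_{ss}\wsupmid = \tfrac{(\Asup m)^m}{(m-1)^{m+1}}(\rho(t)-s)^{\frac{1}{m-1}}$, and as $0 < \partial_s\wsupmid < \tfrac{\Asup m}{m-1}(\rho(t)-s)^{\frac{1}{m-1}}$ the diffusive contribution to the bracket is bounded below by $n^2 s^{2-\frac2n}\tfrac{(\Asup m)^{m-1}}{(m-1)^m} \ge n^2 r_0^{2n-2}\tfrac{(\Asup m)^{m-1}}{(m-1)^m}$ -- that is, the $\eps$-regularization of the diffusion only strengthens it. Dropping the nonnegative term $\Asup(\rho(t)-s)^{\frac{m}{m-1}}$ gives $-\wsupmid + \mu s = -\mu(R^n-s) + \Asup(\rho(t)-s)^{\frac{m}{m-1}} - \eps(\rho(t)-s) \ge -(\mu+\eps)(R^n-r_0^n)$, so that, by the parameter choices, the bracket is
\[
  \ge -\theta + n^2 r_0^{2n-2}\,\tfrac{(\Asup m)^{m-1}}{(m-1)^m} - \mu(R^n-r_0^n) - \eps(R^n-r_0^n)
  \ge \beta - \theta - \eps_0(R^n-r_\mathrm{min}^n) > 0,
\]
whence $\Peps\wsupmid \ge 0$ and the proof is complete.

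I expect the only genuinely delicate point to be the bookkeeping in this last estimate: each $\eps$-dependent correction introduced to restore regularity -- the summand $+\eps(\rho(t)-s)$ in $\wsupmid$, the upward shift by $\tfrac{\delta\eps}{m}$, and the $\eps$-regularization of the diffusion operator -- has to be checked to either push $\Peps\wsup$ upward or else be $O(\eps)$ and hence absorbable into the strict gap $\Asup > \Ccrit(r_1)$; the computation is arranged so that all three effects are manifest. Everything else (the $C^1$-matching, the trivial outer region, and the degenerate case in which the inner region is empty) is a routine variant of Lemma~\ref{lm:wsub_construction}.
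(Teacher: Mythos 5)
Your proposal is correct and follows essentially the same approach as the paper's proof: the same computation of derivatives, the same $C^1$-matching at the junction via the choice of $\delta$ and the $+\eps(\rho(t)-s)$ correction, the same factorization of $\Peps\wsupmid$ through $\partial_s\wsupmid$, and the same pair of estimates for the diffusion and taxis contributions (in particular the observation that $\partial_s\wsupmid < \tfrac{\Asup m}{m-1}(\rho(t)-s)^{1/(m-1)}$ lets the $\eps$-regularization only help). The only difference is cosmetic bookkeeping in how the parameters $r_\mathrm{min}$, $\theta$, $\eps_0$ are fixed -- you introduce an explicit buffer $\beta$ and use monotonicity of the relevant quantity in $r_0$, while the paper encodes the same margin directly into a condition of the form $n^2 r_0^{2n-2}\tfrac{(\Asup m)^{m-1}}{(m-1)^m} \ge \mu(R^n - r_0^n) + 2\theta$ together with $\eps_0 \le \theta/R^n$.
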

\begin{proof}
  We first fix $r_\mathrm{min} \in (0, r_1)$ as well as small $\theta > 0$ such that 
  \begin{equation}\label{eq:wsup_condition_modified}
    \Asup \geq \frac{m-1}{m}\left(\frac{(\mu R^n - \mu r_0^n + 2\theta)(m-1)}{r_0^{2n - 2} n^2}\right)^\frac{1}{m-1}
  \end{equation}
  for all $r_0 \in [r_\mathrm{min}, r_1)$.
  We then further note that the definition of $\Tsup$ entails that
  \[
    r_1^n \leq \rfn(t) \leq R^n
    \qquad \text{for all $t \in [0, \Tsup)$}.
  \] 
  Finally, we let $\eps_0 \defs \min(\frac12, \frac{\theta}{R^n})$ and fix $\eps \in (0, \eps_0)$ as well as $r_0 \in [r_\mathrm{min}, r_1)$.

  By straightforward calculations, we immediately see that 
  \begin{align*}
    \partial_s \wsupmid(s,t) &= \frac{\Asup m(\rfn(t) - s)^{\frac{1}{m-1}}}{m-1} - \eps, \\
    \partial_{ss} \wsupmid(s,t) &= -\frac{\Asup m(\rfn(t) - s)^{\frac{1}{m-1}-1}}{(m-1)^2},  \\
    \partial_t \wsupmid(s,t) &= -\frac{\Asup m \rfn'(t)(\rfn(t) - s)^{\frac{1}{m-1}}}{m-1} + \eps \rfn'(t) = -\theta \partial_s \wsupmid(s,t) 
  \end{align*}
  for all $s\in[r_0^n, \rfn(t) - \delta)$ and $t\in[0,\Tsup]$.
  As the definition of $\delta$ warrants that
  \begin{align*}
    \wsupmid(\rfn(t) - \delta,t) &= \mu R^n - \Asup \delta^{\frac{m}{m-1}} + \eps \delta
    = \mu R^n  + (\eps - \Asup \delta^\frac{1}{m-1})\delta
    %
    %
    %
    %
    = \mu R^n + \frac{\delta \eps}{m}
    = \wsupout(\rfn(t) - \delta, t)
  \end{align*}
  and 
  \begin{align*}
    \partial_s \wsupmid(\rfn(t) - \delta,t) = \frac{\Asup m}{m-1} \delta^{\frac{1}{m-1}} - \eps = 0 = \partial_s \wsupout(\rfn(t) - \delta, t),
  \end{align*}
  for all $t\in[0,\Tsup]$,
  and as $\partial_{ss}\wsupmid$ is continuous on the compact set $[r_0^n, \rfn(t) - \delta]$ for all $t\in[0,\Tsup]$,
  our desired regularity properties for $\wsup$ are immediately evident.
  Moreover, since $\partial_{ss} \wsupmid(s,t) < 0$, we see that $\partial_{s} \wsupmid(s,t) > 0$ for all $s\in[r_0^n, \rfn(t) - \delta)$ and $t\in[0,\Tsup]$.

  Plugging the above into the parabolic operator $\Peps$, we then see that 
  \[
    \Peps \wsupmid = \partial_s \wsupmid \left[ -\theta - n^2 s^{2-\frac2n}(\partial_s \wsupmid + \eps)^{m-1}\frac{\partial_{ss} \wsupmid}{\partial_s \wsupmid} - \wsupmid + \mu s \right]
  \]
  for all $s\in[r_0^n, \rfn(t) - \delta)$ and $t\in[0,\Tsup]$.
  We now begin by deriving that 
  \begin{align*}
    - \wsupmid + \mu s &= \mu(s - R^n)  + \Asup(\rfn(t) - s)^{\frac{m}{m-1}} - \eps(\rfn(t) - s) \\
    &\geq -\mu (R^n - r_0^n) - \eps \rho(t) \\
    &\geq -\mu (R^n - r_0^n)  - \eps_0 R^n
    \geq -\mu (R^n - r_0^n) - \theta \numberthis \label{eq:wsup_estimate_1}
  \end{align*}
  for all $s\in[r_0^n, \rfn(t) - \delta)$ and $t\in[0,\Tsup]$ by our choice of $\eps_0$.
  Now considering the diffusion term, we further gather that 
  \begin{align*}
    - n^2 s^{2-\frac2n}(\partial_s \wsupmid + \eps)^{m-1}\frac{\partial_{ss} \wsupmid}{\partial_s \wsupmid}
    &= \frac{n^2 s^{2-\frac2n}}{m-1}\left(\frac{\Asup m}{m-1}\right)^{m}\frac{(\rfn(t) - s)^{\frac{1}{m-1}} }{\frac{\Asup m}{m-1}(\rfn(t) - s)^{\frac{1}{m-1}} - \eps} \\
    &\geq \frac{n^2 r_0^{2n-2}}{m-1}\left(\frac{\Asup m}{m-1}\right)^{m-1}
    \geq \mu (R^n - r_0^n) + 2\theta
  \end{align*}
  for all $s\in[r_0^n, \rfn(t) - \delta)$ and $t\in[0,\Tsup]$ due to \eqref{eq:wsup_condition_modified}
  and thus that
  \[ 
    \Peps \wsupmid \geq \partial_s \wsupmid \left[ -\theta + \mu (R^n - r_0^n) + 2\theta - \mu (R^n - r_0^n) - \theta \right] = 0
  \]
  by combining this with (\ref{eq:wsup_estimate_1}).
  Since $\wsupout$ is constant and hence fulfils $\Peps \wsupout = 0$, we obtain \eqref{eq:wsup_conclusion}.
\end{proof}

Using the above family of comparison functions, our desired result then follows from Lemma~\ref{lm:cmp_princ}, which importantly does not require $\wsup$ to be strictly increasing.
\begin{lemma}\label{lm:wsup_comparison}
  Let $r_1 \in (0,R)$ and $C > \Ccrit(r_1)$ with $\Ccrit(r_1)$ as in (\ref{eq:Ccrit_def}). Then there exist $r_\mathrm{min} \in (0, r_1)$, $\theta > 0$ as well as $\lambda > 1$, such that the following holds for all $r_0 \in [r_\mathrm{min},r_1)$:

  If \begin{equation}\label{eq:wsup_w_0_condition}
    w_0(s) \leq \mu R^n - C (r_1^n - s)^{\frac{m}{m-1}}
  \end{equation}
  for all $s \in [r_0^n, r_1^n]$,
  then there exists $T \in (0,\texist)$ such that 
  \begin{equation}\label{eq:wsup_w_comp_result}
    w(s,t) \leq \begin{cases}
      \mu R^n- \frac{C}{\lambda} (r_1^n + \theta t - s)^{\frac{m}{m-1}}& \text{ if } s < r_1^n + \theta t, \\
      \mu R^n &\text{ if } s \geq r_1^n + \theta t
    \end{cases}
  \end{equation}
  for all $s \in[r_0^n, R^n]$ and a.e.\ $t \in [0, T]$.
\end{lemma}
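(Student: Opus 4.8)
The plan is to run a single comparison argument --- simpler than the two-step procedure of Lemma~\ref{lm:wsub_comparison}, since the supersolutions $\wsup_\eps$ from Lemma~\ref{lm:wsup_construction} possess a genuinely constant outer branch, so no auxiliary interior point $r_2$ is needed. First I would pick $\lambda > 1$ so small that $\Asup \defs \tfrac{C}{\lambda} > \Ccrit(r_1)$, which is possible precisely because $C > \Ccrit(r_1)$, and apply Lemma~\ref{lm:wsup_construction} with this $\Asup$ to obtain $r_\mathrm{min}$, $\theta$, $\eps_0$ and, for each $r_0 \in [r_\mathrm{min}, r_1)$ and $\eps \in (0, \eps_0)$, the function $\wsup_\eps$ (with $\rho(t) = r_1^n + \theta t$ and $\Tsup = \tfrac{R^n - r_1^n}{\theta}$) which is $C^1$, has $\partial_s \wsup_\eps \ge 0$ and $\wsup_\eps(\cdot, t) \in W_\loc^{2,\infty}((r_0^n, R^n))$, and satisfies $\Peps \wsup_\eps \ge 0$ off the single point $s = \rho(t) - \delta$ (a null set in $s$, harmless for Lemma~\ref{lm:cmp_princ}).

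I would then fix $r_0 \in [r_\mathrm{min}, r_1)$ and shrink $\eps_0$ to $\eps_1 \le \eps_0$ so that $\delta = (\tfrac{\eps(m-1)}{\Asup m})^{m-1} < r_1^n - r_0^n$ for all $\eps \in (0, \eps_1)$, which keeps $r_0^n$ in the mid branch of $\wsup_\eps$ throughout $[0, \Tsup]$. The aim is to apply Lemma~\ref{lm:cmp_princ} on $(a, b) = (r_0^n, R^n)$ to $\ul w = \we$ and $\ol w = \wsup_\eps$. The required regularity of $\we$ and $\partial_s \we = \ue(s^{1/n}, \cdot) \ge 0$ are provided by Lemma~\ref{lm:local_ex_eps}; $\Peps \we = 0 \le 0$ is clear; the initial ordering $\we(\cdot, 0) = w_0 \le \wsup_\eps(\cdot, 0)$ holds on the mid branch $[r_0^n, r_1^n - \delta]$ by \eqref{eq:wsup_w_0_condition}, $\Asup \le C$ and nonnegativity of the correction $\eps(r_1^n - s)$, and on the outer branch by $w_0 \le \mu R^n \le \mu R^n + \tfrac{\delta \eps}{m}$ (since $w_0$ is nondecreasing with $w_0(R^n) = \mu R^n$); and the boundary ordering at $s = R^n$ is just $\we(R^n, t) = \mu R^n \le \wsupout(R^n, t)$.

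The only substantial point is the boundary ordering at $s = r_0^n$, where the slack $\Asup < C$ enters. Since $(C - \Asup)(r_1^n - r_0^n)^{\frac{m}{m-1}} > 0$, I would use Lemma~\ref{lm:uniform_convergence}, in its mass-accumulation form $\we(r_0^n, t) \to w_0(r_0^n)$ uniformly in $\eps$ as $t \searrow 0$, to pick $t_0 \in (0, \texist)$ with $\we(r_0^n, t) \le w_0(r_0^n) + \tfrac12 (C - \Asup)(r_1^n - r_0^n)^{\frac{m}{m-1}}$ for all $t \in (0, t_0)$ and $\eps \in (0, \eps_1)$; then, combining with \eqref{eq:wsup_w_0_condition} and choosing the final time $T \in (0, \min\{t_0, \Tsup, \texist\})$ small enough that $\Asup(r_1^n + \theta T - r_0^n)^{\frac{m}{m-1}} \le \tfrac12 (C + \Asup)(r_1^n - r_0^n)^{\frac{m}{m-1}}$, one gets $\we(r_0^n, t) \le \mu R^n - \Asup(\rho(t) - r_0^n)^{\frac{m}{m-1}} + \eps(\rho(t) - r_0^n) = \wsup_\eps(r_0^n, t)$ for all such $t$ and $\eps$. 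Lemma~\ref{lm:cmp_princ} then gives $\we \le \wsup_\eps$ on $[r_0^n, R^n] \times [0, T]$, and letting $\eps = \eps_j \searrow 0$ (so $\delta \searrow 0$, whence $\wsup_\eps$ converges pointwise to the right-hand side of \eqref{eq:wsup_w_comp_result} with coefficient $\tfrac{C}{\lambda}$) together with $\we \to w$ from \eqref{eq:w_pw_conv} yields the claim for all $s \in [r_0^n, R^n]$ and a.e.\ $t \in [0, T]$. I expect the main, though fairly routine, difficulty to be keeping the smallness of $T$ and $\eps_1$ uniform in $\eps$ in this last estimate.
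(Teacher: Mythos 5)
Your argument is correct and follows essentially the same route as the paper's proof: apply Lemma~\ref{lm:wsup_construction} with $\Asup = C/\lambda$, verify the parabolic‐boundary ordering against $\wsup_\eps$ (the only subtle point being the left endpoint $s = r_0^n$, settled by the uniform short-time continuity of Lemma~\ref{lm:uniform_convergence} combined with a smallness condition on $T$), invoke Lemma~\ref{lm:cmp_princ}, and pass to the limit $\eps = \eps_j \searrow 0$ via \eqref{eq:w_pw_conv}. The only superficial difference is the choice of intermediate constant strictly between $\Asup$ and $C$ (you use $\frac12(C + \Asup)$, the paper uses $\frac{2C}{\lambda+1}$), which is immaterial; and your closing concern about uniformity of $T, \eps_1$ in $\eps$ is unfounded, since both are fixed before quantifying over $\eps$.
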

\begin{proof}
  We begin by fixing $\lambda > 1$ such that 
  \[
    \Asup \defs \frac{C}{\lambda} > \Ccrit(r_1).
  \]
  We now let $\Tsup \defs \frac{R^n-r_1^n}{\theta} > 0$ and fix $r_\mathrm{min} \in (0,r_1)$, $\theta > 0$ as well as $\eps_0 \in (0,1)$ according to Lemma~\ref{lm:wsup_construction}.
  As (\ref{eq:wsup_w_0_condition}) ensures that
  \[ 
    w_0(r_0^n) \leq \mu R^n - C(r_1^n - r_0^n)^{\frac{m}{m-1}} < \mu R^n - \frac{2C}{\lambda + 1}(r_1^n - r_0^n)^{\frac{m}{m-1}}
  \]
  due to $\frac{\lambda + 1}{2} > 1$, Lemma~\ref{lm:uniform_convergence} allows us to fix $T \in (0,\min(T_0,\Tsup) )$ and $\eps_1 \in (0,\eps_0]$ such that 
  \begin{equation}\label{eq:wsup_left_boundary}
    \we(r_0^n,t) \leq \mu R^n - \frac{2C}{\lambda+1}(r_1^n - r_0^n)^{\frac{m}{m-1}}
  \end{equation}
  for all $t\in [0,T]$ and $\eps \in (0,\eps_1)$ as well as 
  \begin{equation}\label{eq:wsup_delta_small}
    \delta = \left(\frac{\eps(m-1)}{\Asup m}\right)^{m-1} < r_1^n - r_0^n 
  \end{equation}
  for all $\eps \in (0, \eps_1)$ and
  \begin{equation}\label{eq:wsup_T_condition}
    T \leq \frac{r_1^n - r_0^n}{\theta}\left[\left( \frac{2\lambda}{\lambda + 1} \right)^\frac{m-1}{m} - 1\right].
  \end{equation}
  Let now $\wsup_\eps$ with $\Peps \wsup_\eps \geq 0$ be as in (\ref{eq:wsup_definition}) for all $\eps \in (0,\eps_1)$ with the parameters as chosen above. We then check the necessary ordering at the left boundary as follows:
  \begin{align*}
    \wsup_\eps(r_0^n, t) &= \mu R^n - \frac{C}{\lambda}(r_1^n - r_0^n + \theta t)^{\frac{m}{m-1}} + \eps(r_1^n - r_0^n + \theta t) \\
    &\geq \mu R^n - \frac{2C}{\lambda+1}(r_1^n - r_0^n)^{\frac{m}{m-1}} \geq \we(r_0^n, t)
  \end{align*}
  for all $t \in [0,T]$ and $\eps \in (0,\eps_1)$ due to (\ref{eq:wsup_left_boundary}) and (\ref{eq:wsup_T_condition}) as well as (\ref{eq:wsup_delta_small}) ensuring that $r_0^n < r_1^n - \delta \leq \rho(t) - \delta$. Regarding the right boundary point, we observe that 
  \[
    \wsup_\eps(R^n, t) = \mu R^n + \frac{\delta\eps}{m} \geq \mu R^n
  \]
  for all $t \in [0,T]$ and $\eps \in (0,\eps_1)$, where $T \leq \Tsup = \frac{R^n-r_1^n}{\theta}$ ensures that $R^n \geq \rho(t) > \rho(t) - \delta$.
  Moreover, we can conclude that 
  \[
    \wsup_\eps(s, 0) = \mu R^n - \frac{C}{\lambda}(r_1^n - s)^{\frac{m}{m-1}} + \eps(r_1^n - s) \geq \mu R^n - C(r_1^n - s)^{\frac{m}{m-1}} \geq w_0(s)
  \]
  for all $s \in [r_0^n, r_1^n-\delta)$ due to $\lambda > 1$ and (\ref{eq:wsup_w_0_condition}) as well as 
  \[
    \wsup_\eps(s, 0) = \mu R^n + \frac{\delta \eps}{m} \geq \mu R^n\geq w_0(s)
  \]
  for all $s \in [r_1^n-\delta, R^n]$ since by construction $w_0 \leq \mu R^n$.
  Thus, we can now employ Lemma~\ref{lm:cmp_princ} to gain that 
  \[
    \we(s,t)\leq\wsup_\eps(s,t)
  \]
  for all $(s,t) \in [r_0^n, R^n] \times [0,T]$. Using that $\wsup_\eps$ converges pointwise to 
  \[
    (s,t) \mapsto \begin{cases}
      \mu R^n - \frac{C}{\lambda}(r_1^n + \theta t - s)^\frac{m}{m-1} &\text{if } s < \theta t + r_1^n, \\
      \mu R^n & \text{if } s \geq \theta t + r_1^n
    \end{cases}
  \]
  as $\eps \searrow 0$, we then gain our desired result due to the pointwise convergence property in (\ref{eq:w_pw_conv}).
\end{proof}

\section{Proof of Theorem~\ref{th:main}}\label{sec:proof_main_thm}
Having now established all necessary prerequisites, we can start putting the puzzle pieces together to prove our main theorem.
\begin{proof}[Proof of Theorem~\ref{th:main}]
  We begin by treating the support shrinking case. We thus now assume that (\ref{eq:cond_shrinking}) holds for some positive $A < \Acrit$ and $r_0 \in (0,r_1)$.
  The definitions of $\Acrit$ and $\Ccrit$ in \eqref{eq:def_c_crit} and (\ref{eq:Ccrit_def}) entail 
  \[
    \Acrit \frac{n^{-\frac{1}{m-1}}r_1^{-\frac{(n-1)m}{m-1}} r_1^{n-1}(m-1)}{m} = \Ccrit(r_1),
  \]
  so that by replacing $r_0$ with an $r_0$ sufficiently close to $r_1$, if necessary, we may assume
  \[
    C \defs \frac{A n^{-\frac{1}{m-1}}r_0^{-\frac{(n-1)m}{m-1}} r_1^{n-1}(m-1)}{m} < \Ccrit(r_1).
  \]
  Let now $r_\mathrm{min} \in (0,r_1)$ be as provided by Lemma~\ref{lm:wsub_comparison} for the constant $C$ fixed above. Let then further $
    r_\star \defs \max(r_\mathrm{min}, r_0)
  $.
  Lemma~\ref{lm:crit_w0} then yields that \[
    w_0(s) \geq \mu R^n - C(r_1^n - s)^\frac{m}{m-1}
  \]
  for all $s \in (r_\star^n, r_1^n)$, which in turn allows us to apply Lemma~\ref{lm:wsub_comparison}  to further fix $T \in (0,\texist)$, $\lambda > 0$ and $\theta > 0$ such that (\ref{eq:wsub_w_comp_result}) holds for all $s \in [r_\star^n, r_1^n]$ and a.e.\ $t \in [0,T]$. If we complement this lower bound with the fact that $w(s,t) \leq \mu R^n$ for all $s \in [r_\star^n, R^n]$ and a.e.\ $t \in [0,T]$ due to (\ref{eq:w_upper_bound}), we gain that 
  \[
    w(s,t) = \mu R^n
  \]
  for all $s \in [r_1^n - \theta t, R^n]$ and a.e.\ $t\in[0,T]$. This implies that 
  \[
    0 = w(R^n, t) - w(r_1^n - \theta t, t) = n\int^{R}_{(r_1^n - \theta t)^\frac{1}{n}} \rho^{n-1} u(\rho,t) \drho
  \]
  for a.e.\ $t \in [0,T]$. Given the a.e.\ nonnegativity of $u$ and positivity of $\rho$, it thus follows that 
  \[
    u(s,t) = 0
  \]
  for a.e.\ $s \in [(r_1^n - \theta t)^\frac{1}{n}, R]$ and a.e.\ $t\in[0, T]$ and hence
  \[
    \sup (\ess\supp u(\cdot, t)) \leq (r_1^n - \theta t)^\frac{1}{n}
    \qquad \text{for a.e.\ $t \in (0,T)$}.
  \]
  As here $(r_1^n - \theta t)^\frac{1}{n} - r_1 \le -\frac1n r_1^{1-n} \theta t \sfed -\zeta t$ for all $t \in (0, T)$ by the mean value theorem, this implies \eqref{eq:result_shrinking}.

  By combining Lemma~\ref{lm:crit_w0} and Lemma~\ref{lm:wsup_comparison} while assuming (\ref{eq:cond_expanding}), in a very similar fashion to the argument above, we can gain another set of $C  > 0$, $r_\star\in(0,r_1)$, $T \in (0,\texist)$, $\lambda > 0$ and $\theta > 0$ such that (\ref{eq:wsup_w_comp_result}) holds for all $s \in [r_\star^n, R^n]$ and a.e.\ $t \in [0, T]$. This implies that 
  \[
    w(R^n, t) - w(s,t) \geq \mu R^n - \mu R^n + \frac{C}{\lambda} (r_1^n + \theta t - s)^\frac{m}{m-1} > 0
  \]
  and thus 
  \[
    n\int^{R}_{s^\frac{1}{n}} \rho^{n-1} u(\rho,t) \drho > 0
  \]
  for all $s \in (r_\star^n, r_1^n + \theta t)$ and a.e.\ $t \in [0,T]$.  Therefore, for all $s \in (r_\star^n, r_1^n + \theta t)$ and a.e.\ $t \in [0,T]$, there must exist a set $M(s, t) \subseteq (s^\frac{1}{n}, R)$ of positive measure such that $u(\cdot, t) > 0$ on $M(s, t)$. This directly implies $\sup(\ess\supp u(\cdot, t)) \geq s^\frac1n$ for all $s \in (r_\star^n, r_1^n + \theta t)$ and a.e.\ $t \in [0,T]$ and hence
  \[
    \sup (\ess\supp u(\cdot, t)) \geq (r_1^n + \theta t)^\frac{1}{n}
    \qquad \text{for a.e.\ $t \in (0,T)$}.
  \]
  By a final application of the mean value theorem,
  we conclude that \eqref{eq:result_expanding} holds for $\zeta \defs \frac1n (r_1^n + \theta T)^{\frac1n-1} \theta$.
\end{proof} 

\section*{Acknowledgments}
The second author acknowledges support of the \emph{Deutsche Forschungsgemeinschaft} in the context of the project \emph{Fine structures in interpolation inequalities and application to parabolic problems}, project number 462888149.
 
\addcontentsline{toc}{section}{References}

\footnotesize

\end{document}